\newcommand{\longname}{Optimal Shrinkage of Eigenvalues \\in the Spiked Covariance Model}
\theoremstyle{theorem} \newtheorem{thm}{Theorem}
\theoremstyle{definition} 
\theoremstyle{definition} \newtheorem{defn}{Definition}
\theoremstyle{theorem} \newtheorem{lemma}{Lemma}
\theoremstyle{theorem} \newtheorem{proposition}{Proposition}
\theoremstyle{definition}
\newcommand{\bitem}{\begin{itemize}}
\newcommand{\eitem}{\end{itemize}}
\newcommand{\goto}{\rightarrow}
\newcommand{\beq}{\begin{equation}}
\newcommand{\eeq}{\end{equation}}
\newcommand{\R}{\ensuremath{\mathbb{R}}}
\newcommand{\E}{\ensuremath{\mathbb{E}}}
\newcommand{\Nc}{\mathcal{N}}
\newcommand{\aslim}{\stackrel{a.s.}{\longrightarrow}}
\newcommand{\Pd}[3]{\ifthenelse{\equal{#3}{1}}{\frac{\partial #1}{\partial #2}}{\frac{\partial^{#3} #1}{\partial #2^{#3}}}}
\newcommand{\ObsOne}{{\sc [Obs. 1]}}
\newcommand{\ObsTwo}{{\sc [Obs. 2]}}
\newcommand{\ObsThree}{{\sc [Obs. 3]}}
\newcommand{\ObsFour}{{\sc [Obs. 4]}}
\newcommand{\Asy}{{\sc [Asy($\gamma$)]}}
\newcommand{\Spike}{{\sc [Spike($\ell_1,\ldots,\ell_r$)]}}
\newcommand{\SpikeSigma}{{\sc [Spike($\ell_1,\ldots,\ell_r|\sigma^2$)]}}
\newcommand{\OneSpike}{{\sc [Spike($\ell$)]}}
\newcommand{\argmin}{\mbox{argmin}}
\newcommand{\hf}{\tfrac{1}{2}}
\def\tr{{\rm tr}}
\def\E{{\mathbb E}}
\numberwithin{equation}{section}
\newtheorem*{theorem*}{Theorem}
\title{\longname}
\author{
David L. Donoho \footnotemark[1]
\and
Matan Gavish \footnotemark[2]
\and
Iain M. Johnstone \footnotemark[1]
}
\date{}
\begin{document}

\maketitle
\renewcommand{\thefootnote}{\fnsymbol{footnote}}
\footnotetext[1]{Department of Statistics, Stanford University}
\footnotetext[2]{School of Computer Science and Engineering, Hebrew University
of Jerusalem}
\renewcommand{\thefootnote}{\arabic{footnote}}

\begin{center}
  {\large \textsl{To the memory of Charles M. Stein, 1920-2016}}
\end{center}

\begin{abstract}
  We show that in a common high-dimensional covariance model, the choice of loss
function has a profound effect on optimal estimation. 

In an asymptotic framework 
based on the Spiked Covariance model and  use of orthogonally
invariant estimators, we show
that optimal estimation of the population covariance matrix 
boils down to design of an optimal shrinker 
$\eta$ that acts elementwise
on the sample eigenvalues.
Indeed, to each loss function there corresponds a unique admissible
eigenvalue 
shrinker $\eta^*$  dominating all other shrinkers. The shape of the
optimal shrinker  is 
determined by the choice of loss function and, crucially, by
inconsistency of both eigenvalues {\it and}  eigenvectors of the sample
covariance matrix.

Details of these phenomena and closed form formulas for the optimal eigenvalue 
shrinkers
are worked out for a menagerie of 26 loss functions
for covariance estimation 
found in the literature, including the Stein, Entropy, Divergence,
Fr\'{e}chet, Bhattacharya/Matusita, Frobenius Norm, Operator Norm,
Nuclear Norm and Condition Number losses.

\end{abstract}

\vspace{.1in}
{\bf Key Words.}  
Covariance Estimation, Precision Estimation, Optimal Nonlinearity, 
Stein Loss, Entropy Loss, Divergence Loss, Fr\'{e}chet Distance,
Bhattacharya/Matusita Affinity, Quadratic Loss,
Condition Number Loss, High-Dimensional Asymptotics, 
Spiked Covariance, Principal Component Shrinkage
\vspace{.1in}

\vspace{.1in}
{\bf Acknowledgements.}

We thank Amit Singer, Andrea Montanari, Sourav Chatterjee and Boaz Nadler for
helpful discussions. We also thank the anonymous referees for significantly
improving the manuscript through their helpful comments.  
This work was partially supported by NSF DMS-0906812  (ARRA).
MG was partially supported by a
William R. and Sara Hart Kimball Stanford Graduate Fellowship.

\vspace{.1in}
\newpage
\tableofcontents




\section{Introduction} \label{intro:sec}

Suppose we observe  
$p$-dimensional Gaussian vectors  $X_i \stackrel{i.i.d}{\sim} \Nc(0,\Sigma_p)$, $i=1,\dots,n$,
with $\Sigma = \Sigma_p$  the underlying  $p$-by-$p$  population
covariance matrix. 
To estimate $\Sigma$, 
we form the empirical (sample) covariance matrix $S =  S_{n,p} = n^{-1}
\sum_{i=1}^n X_i X'_i$; this is the maximum likelihood estimator.
Stein \cite{stein1956,stein1986} observed that the maximum
likelihood estimator $S$ ought to be improvable by eigenvalue shrinkage.

Write $S = V\Lambda V'$ for the eigendecomposition of $S$, where $V$
is orthogonal and the diagonal matrix
$\Lambda=\text{diag}(\lambda_1,\ldots,\lambda_p)$ 
contains the { empirical eigenvalues}.  Stein \cite{stein1986}
proposed to shrink the eigenvalues
by applying a specific nonlinear mapping 
$\varphi$
producing the estimate $\hat{\Sigma}_\varphi =
  V\mathbf{\varphi}(\Lambda)V'$, where $\varphi$ 
 maps the space of positive diagonal matrices onto itself.
In the ensuing half century,
  research on eigenvalue shrinkers has flourished, producing an extensive
  literature.  We can point here only to a fraction,
with  pointers organized into  early decades
  \cite{james1961estimation,EfronMorris1976,haff1979identity,haff1980empirical,berger1982estimation,haff1979estimation},
  the middle decades
  \cite{dey1985estimation,Sharma1985,SinhaGhosh,Kubokawa198969,krishnamoorthy1989improved,loh1991estimating,CJS:CJS91,pal1993estimating,yang1994estimation,gupta1995improved,LinPerlman1985},
  and the last decade
  \cite{daniels2001shrinkage,ledoit2004well,Sun2005455,huang2006covariance,karoui2008spectrum,ledoitPeche,ledoit2012nonlinear,fan2008high,chen2010shrinkage,won2012condition}.
Such papers typically choose some loss function $L_p:S_p^+\times
  S_p^+\to[0,\infty)$, where $S_p^+$ is the space of positive semidefinite
    $p$-by-$p$ matrices, and develop a shrinker $\eta$ with ``favorable'' risk
    $\E\, L_p (\Sigma\,,\,\hat{\Sigma}_\eta(S))$. 
 
    In high dimensional problems, $p$ and $n$ are often of comparable
    magnitude. There, the maximum likelihood
    estimator is no longer a reasonable choice for covariance estimation and the
    need to shrink becomes acute.

In this paper, we consider a popular large $n$, large $p$  setting with $p$
comparable to $n$, and a set of assumptions about $\Sigma$ known as the
{\em Spiked Covariance Model} \cite{johnstone2001distribution}. 
We study a
variety of loss functions derived from or inspired by  the literature, and show
that to each ``reasonable'' nonlinearity $\eta$ there corresponds a well-defined asymptotic loss.

In the sibling problem of matrix denoising under a similar setting, it has been
shown that there exists a unique asymptotically admissible shrinker \cite{Shabalin2013, 2013arXiv1305.5870D}.
The same phenomenon is shown to exist here: for many different loss functions,
we show that there exists a  {\em unique optimal nonlinearity} $\eta^*$,
which we explicitly provide.  Perhaps surprisingly,  $\eta^*$ is the
only asymptotically admissible nonlinearity, namely, it offers equal or better
asymptotic loss than that of any other choice of $\eta$, across all possible 
Spiked Covariance models.

\subsection{Estimation in the Spiked Covariance Model} \label{assumptions:subsec}

Consider a sequence of covariance estimation problems, satisfying two 
basic assumptions.
\begin{description}
  \item[\Asy] The number of observations $n$ and
    the number of variables $p_n$ in the $n$-th problem 
follows the  proportional-growth limit $p_n/n \goto \gamma$, as $n \goto
\infty$, for a certain $0
< \gamma \leq 1$. 
\end{description} 

\noindent
Denote the population and sample covariances in the $n$-th problem by
$\Sigma = \Sigma_{p_n}$ and $S = S_{n,p_n}$ and
assume that the eigenvalues $\ell_i$ of $\Sigma_{p_n}$
satisfy:

\begin{description}
  \item[\Spike] 
The $r$ ``spikes'' $\ell_1 > \ldots > \ell_r \geq 1$ are fixed
independently of $n$ and $p_n$, and $\ell_{r+1} = \ldots = \ell_{p_n}
= 1$. 
\end{description} 

The spiked model exhibits three important phenomena, not seen in classical
fixed-$p$ asymptotics, that play an essential role in the construction of
optimal estimators. Drawing on results from
\cite{marvcenko1967distribution,baik2005phase,baik2006eigenvalues,paul2007asymptotics,Benaych-Georges2011,bai2008central},
we highlight:

\medskip
\textit{a. Eigenvalue spreading.} \ 
Consider model \Asy\ in the null case $\ell_1 = \ldots = \ell_r = 1.$ 
The empirical distribution of the sample eigenvalues 
$\lambda_{1n},\ldots,\lambda_{pn}$ 
converges as $n\to\infty$ to a
non-degenerate absolutely continuous distribution, the
Marcenko-Pastur or `quarter-circle' law \cite{marvcenko1967distribution}.
The distribution, or `bulk', is supported on a single interval, whose
limiting `bulk edges' are given by 
\begin{equation}
  \label{eq:lamplus}
  \lambda_{\pm}(\gamma) = (1\pm\sqrt{\gamma})^2.
\end{equation}

\medskip
\textit{b. Top eigenvalue bias.} \
Consider models \Asy\ and \Spike. 
For $i = 1, \ldots, r$, the leading sample eigenvalues satisfy
\begin{eqnarray}
  \label{eig_displacement:eq}
  \lambda_{in} & \aslim \,\lambda(\ell_i) \,,  
\end{eqnarray}
where the `biasing' function
\begin{eqnarray}
  \label{lambda_of_ell:eq}
  &
  \lambda(\ell) =   \ell + \gamma \ell/(\ell-1), 
  & \qquad \ell \geq \ell_+(\gamma)\,,
  \end{eqnarray}
\and $\lambda(\ell) \equiv (1 + \sqrt \gamma)^2 = \lambda_+(\gamma)$
for $\ell \leq \ell_+(\gamma)$, the 
\textit{Baik-Ben Arous-Pech\'{e} transition point} 
\begin{eqnarray}
\ell_+(\gamma) & = 1 + \sqrt{\gamma}\,.
\end{eqnarray}
Thus the empirical eigenvalues $\lambda_i$ are shifted upwards from
their theoretical counterparts $\ell_i$ by an asymptotically predictable
amount, of a size that exceeds $\gamma$ even for very large signal
strengths $\ell_i$.

\medskip
\textit{c. Top eigenvector inconsistency.} \
Again consider models \Asy\ and \Spike,  noting that $\ell_1 > \ldots >
\ell_r$ are distinct. The angles between the sample eigenvectors
$v_{1n},\ldots ,v_{pn},$ 
and the corresponding ``true'' population eigenvectors $u_{1n},\ldots,u_{pn}$ 
have non-zero limits:
\begin{equation}
  \label{eq:uv-incon}
  | \langle u_{in} , v_{jn} \rangle | \aslim
  \,\delta_{i,j} \cdot c(\ell_i)  \qquad 1 \leq i,j \leq r \,,
\end{equation}
where the cosine function is given by
\begin{equation}
\label{c_of_ell:eq}
    c(\ell) =  \sqrt{ \frac{1 -
    \gamma/(\ell-1)^2 }{1 + \gamma/(\ell-1)} }
      \qquad \ell \geq \ell_+(\gamma)\,,
\end{equation}
and $c(\ell)=0$ for $\ell\leq \ell_+(\gamma)$.
\medskip

\textit{Loss functions and optimal estimation.} \ 
Now consider a class of estimators for the population covariance $\Sigma$, based
on {\em individual} shrinkage of the sample eigenvalues. Specifically,
\begin{equation}
  \label{eq:scalar-shrink}
  \hat \Sigma 
  = \hat \Sigma_\eta
  = \eta(\lambda_1) v_1 v_1' + \ldots + \eta(\lambda_p) v_p v_p',
\end{equation}
where $v_i$ is the sample eigenvector with sample eigenvalue $\lambda_i$ and
$\eta(\lambda)$ is a \textit{scalar nonlinearity}, 
$\eta : \R^+ \to [1,\infty)$, so that the \textit{same} function acts on each
sample eigenvalue. While this appears to be a significant restriction 
from Stein's use of vector functions $\varphi$ \cite{stein1986}, the discussion in 
Section \ref{optimality:sec} shows that nothing is lost in our setting
by the restriction to scalar shrinkers.

Consider a family of loss functions $L=\{L_p\}_{p=1}^\infty$ 
and a fixed nonlinearity $\eta:[0,\infty)\to\mathbb{R}$.
Define the asymptotic loss relative to $L$ of the shrinkage
estimator $\hat{\Sigma}_\eta$
in model \Spike\ by
\begin{eqnarray} \label{asyloss-intro:eq}
  L_\infty(\ell_1,\ldots,\ell_r|\eta) = \lim_{n\to\infty}\,L_{p_n}\left(
  \Sigma_{p_n}\,,\,\hat{\Sigma}_\eta(S_{n,p_n})
  \right)\,,
\end{eqnarray}
assuming such limit exists.
If a nonlinearity $\eta^*$ satisfies 
\begin{equation}
  \label{eq:opt-shrink}
  L_\infty(\ell_1,\ldots,\ell_r|\eta^*) \leq 
  L_\infty(\ell_1,\ldots,\ell_r|\eta) 
\end{equation}
for any other nonlinearity $\eta$, any $r$ and any spikes
$\ell_1,\ldots,\ell_r$, and if for any $\eta$ the inequality is strict at some choice of
$\ell_1,\ldots,\ell_r$, then we
say that $\eta^*$ is the {\em unique asymptotically admissible} nonlinearity (nicknamed ``optimal'') for the loss sequence
$L$.

\begin{figure}[htp]
  \centering
  \includegraphics[width=5in]{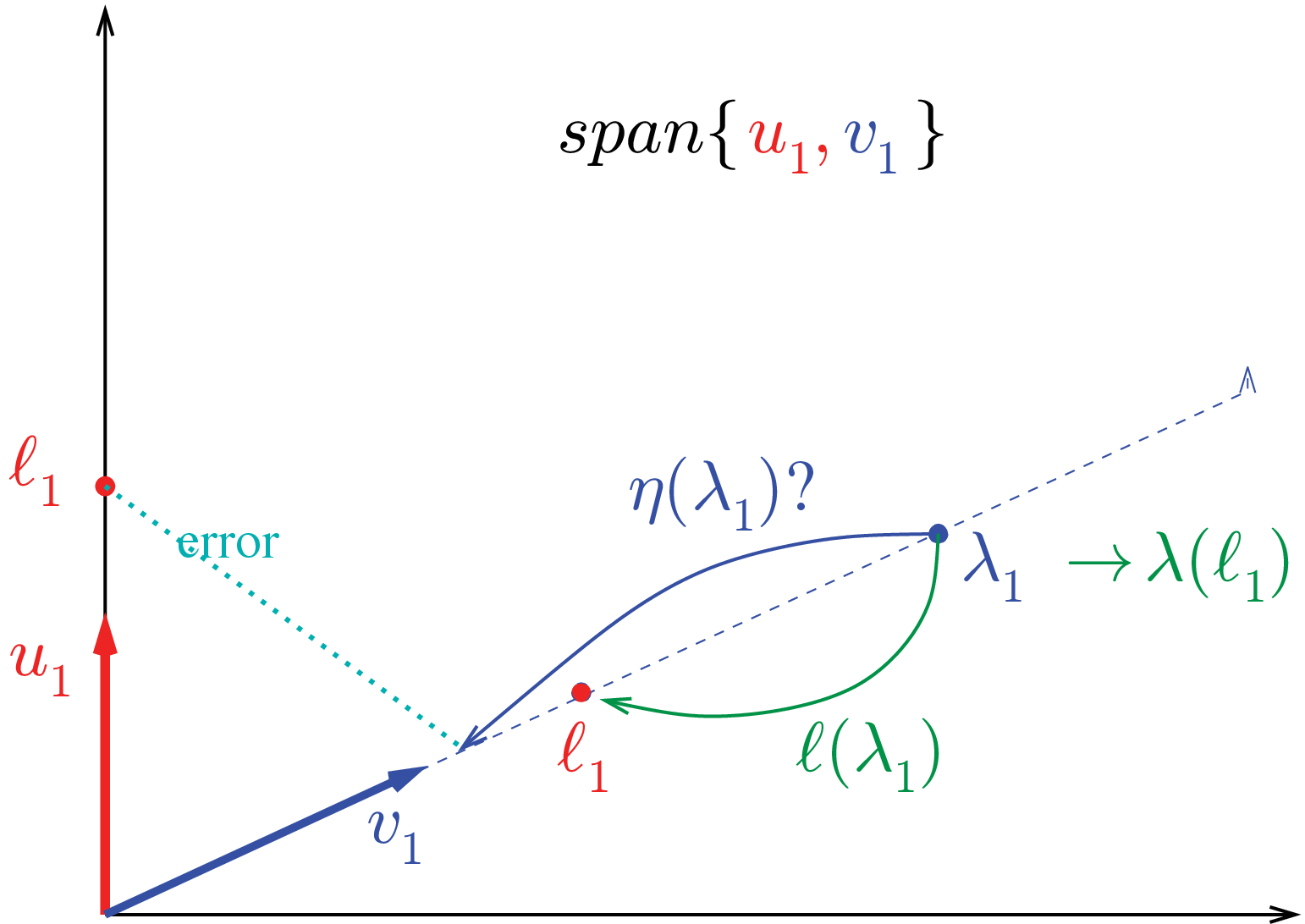}
  \caption{Shrinking empirical eigenvalue $\lambda_1$ to a value
    $\eta(\lambda_1)$ that is smaller than the inverse function
    $\ell(\lambda_1)$ may reduce the error of estimation.}
  \label{fig:shrinkage-pic}
\end{figure}

In constructing estimators, it is natural to expect that the effect
of the biasing function $\lambda(\ell)$ in (\ref{lambda_of_ell:eq})
might be undone simply by applying its inverse function $\ell(\lambda)$,
given by 
\begin{equation}
     \ell(\lambda) =
       \frac{(\lambda+1-\gamma) + \sqrt{(\lambda+1-\gamma)^2 - 4
            \lambda}}{2}   \qquad \qquad \lambda > \lambda_+(\gamma).
            \label{ell_of_lambda:eq}
\end{equation}
However, eigenvector inconsistency makes the situation more
complicated (and interesting!), as 
we illustrate using Figure \ref{fig:shrinkage-pic}.
Focus on the plane spanned by $u_1$, the top population
eigenvector, and by $v_1$, its sample counterpart.
We represent $\ell_1 u_1 u_1'$, the top rank one component of
$\Sigma$, by the vector $\ell_1 u_1$. 
The corresponding top rank one component of $S$ is $\lambda_1 v_1
v_1'$, represented by $\lambda_1 v_1$. If we apply the inverse
function (\ref{ell_of_lambda:eq}) to $\lambda_1$, we obtain 
$\ell(\lambda_1) v_1 v_1'$.
Since $v_1$ is not collinear with $u_1$, there is a non-vanishing
error $\ell(\lambda_1) v_1 v_1' - \ell_1 u_1 u_1'$ that remains, even
though $\ell(\lambda_1) - \ell_1 = O_p(n^{-1/2})$. 
As the picture suggests, it is quite possible that a different amount
of shrinkage, $\eta(\lambda_1) v_1 v_1'$ will lead to smaller error.
However, we will see that {\em the optimal choice of $\eta$ depends greatly
on the particular error measure $L_p(\Sigma, \hat \Sigma)$ that is chosen}.

To give the flavor of results to be developed systematically later, we
now look at four error measures in common use. 
The first three, based on the operator, Frobenius and nuclear norms,
use the singular values $\sigma_j$ of $\hat \Sigma - \Sigma$:
\begin{equation}
  \label{eq:4-losses}
\begin{split}
  L^O(\Sigma,\hat \Sigma) 
     & = \| \hat \Sigma - \Sigma \|_\infty \quad  = \  \max_i \sigma_i, \\
  L^F(\Sigma,\hat \Sigma) 
     & = \| \hat \Sigma - \Sigma \|_2  \quad \ =  \ \big( \sum_i \sigma_i^2
     \big)^{1/2},\\
  L^N(\Sigma,\hat \Sigma) 
     & = \| \hat \Sigma - \Sigma \|_1 \quad \ = \  \sum_i \sigma_i, \\
  L^{\rm St}(\Sigma,\hat \Sigma) 
     & = \tr (\Sigma^{-1} \hat \Sigma - I) - \log \det (\Sigma^{-1}
     \hat \Sigma).
\end{split}
\end{equation}
The fourth is Stein's loss, widely studied in covariance estimation 
\cite{stein1956,dey1985estimation,konno1991estimation}.

For convenience, we begin with the single spike model
\textbf{Spike($\ell$)}, so that $\Sigma = \Sigma_\ell = I + (\ell - 1)
u_1 u_1'$.
When $\eta$ is continuous, the losses have a deterministic asymptotic
limit $L_\infty(\ell|\eta)$ defined in (\ref{asyloss-intro:eq}).

For many losses, including (\ref{eq:4-losses}), this
deterministic  limiting loss has a simple form, and we can evaluate, often
analytically, the optimal shrinkage function, namely the shrinkage function
satisfying 
 \eqref{eq:opt-shrink}.
For example, writing $\eta^*(\lambda) = \eta_*(\ell(\lambda))$, for the four
popular 
loss
functions \eqref{eq:4-losses}  we find that on
 $\ell > 1 + \sqrt \gamma$ the corresponding four optimal shrinkers are
\begin{alignat}{2}
  \label{eq:four-solns}
    \eta_*^O(\ell) & = \ell  
    & \eta_*^F(\ell) & = \ell c^2 + s^2 \\
    \eta_*^N(\ell) & = \max( 1 + (\ell -1)(1 -2 s^2), 1) \qquad
    & \eta_*^{\rm St}(\ell) 
       & = \ell/(c^2 + \ell s^2)\,, \notag
\end{alignat}
where $s^2 = 1 - c^2$.
 Figure \ref{fig:opt-shrinkers} shows these four optimal shrinkers as a function
 of the sample eigenvalue $\lambda$. 
These are just four examples; The full list of optimal shrinkers we discover in this paper appears in Table
\ref{Table:formulas} below.
In all cases, $\eta_*(\ell) \equiv 1$ for $\ell \leq 1 + \sqrt \gamma$.
Figure \ref{fig:OptimalShrinkers} in Section \ref{sec:OptShrink}
below shows all the full list of optimal shrinkers when
$\gamma=1$.

\begin{figure}[h]
  \centering
  \includegraphics[width=5in,clip]{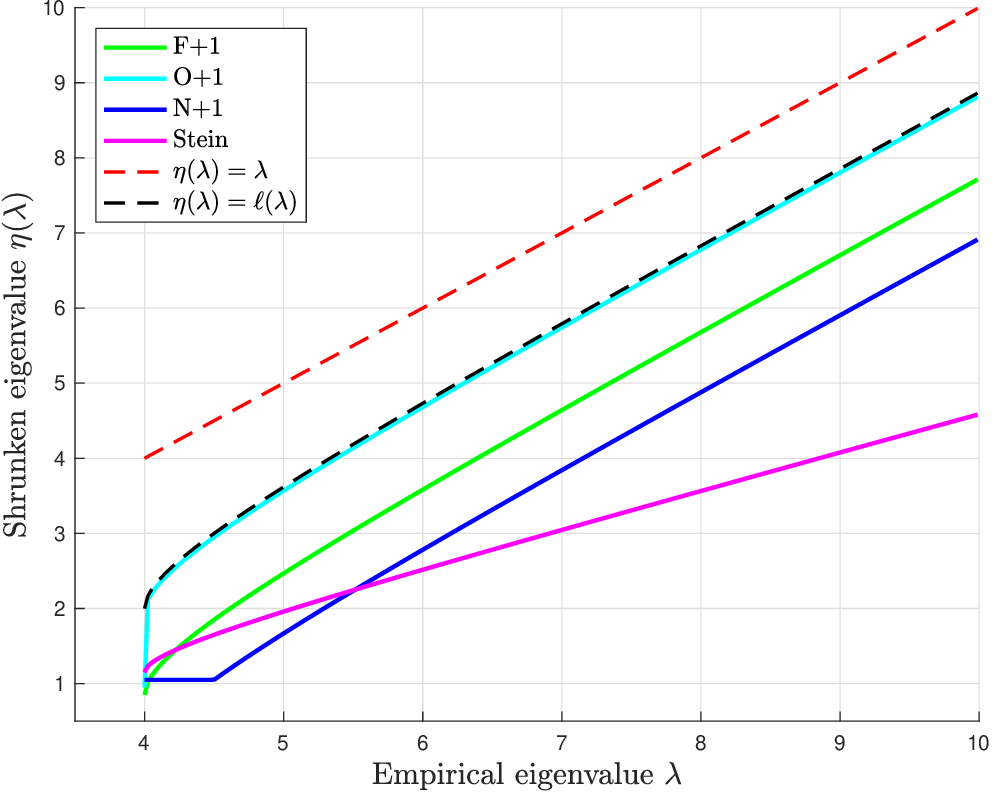}
  \caption{Vertical axis: optimal shrinkers $\eta_*$ from
    (\ref{eq:four-solns}), shown as functions $\eta_*(\ell(\lambda))$
   of the empirical eigenvalue $\lambda$, horizontal axis.
   Here $\gamma = \lim p_n/n = 1$, so $\lambda_+(\gamma) = 4$. (Color online.)
} 
  \label{fig:opt-shrinkers}
\end{figure}

The main conclusion is that the \textit{optimal shrinkage function depends
strongly on the loss function} chosen. 
The operator norm shrinker $\eta_*^O$ simply inverts the biasing
function $\lambda(\ell)$, while the other functions shrink by much
larger, and very different, amounts, with $\eta_*^{\rm St}$
typically shrinking most.
There are also important qualitative differences in the optimal
shrinkers: $\eta_*^O$ is discontinuous at the bulk edge
$\lambda = \lambda_+(\gamma)$. 
The others are continuous, but $\eta_*^N$ has the additional feature
that it shrinks a \textit{neighborhood} of the bulk to $1$.

\smallskip
\textit{Remark.} \ 
The optimal shrinker also depends on $\gamma$, so we might write
$\eta^*(\lambda, \gamma)$.
In model \Asy, one can use the same $\gamma$ for each problem size
$n$. 
Alternatively, in the $n$-th problem, one might use $\gamma_n =
p_n/n$. 
The former choice is simpler, as $\eta^*$ can be regarded as a
univariate function of $\lambda$, and so we make it in Sections 
\ref{intro:sec}--\ref{sec:OptShrink}. 
The latter choice is preferable technically, and perhaps also in
practice, when one has $p$ and $n$, but not $\gamma$. It does, however, 
require us to treat $\eta(\lambda, c)$ as a bivariate
function -- see Section \ref{multspike:sec}.


\subsection{Some key observations}
\label{sec:some-key-observ}

The sections to follow construct a framework for evaluating 
and optimizing
the asymptotic loss \eqref{asyloss-intro:eq}.
We highlight here some observations that will play an important role.
Beforehand, let us introduce a useful 
modification of
(\ref{eq:scalar-shrink}) to a \textit{rank-aware} shrinkage rule:
\begin{equation}
  \label{eq:rank-aware}
  \hat \Sigma_{\eta,r}
   = \sum_{i=1}^r \eta(\lambda_i) v_i v_i' + \sum_{i=r+1}^p  v_i v_i',
\end{equation}
where the dimension $r$ of the spiked model is taken as known.
While our main results concern estimators $\hat \Sigma_\eta$
that naturally do not require $r$ to be known in advance, 
 it will be easier conceptually and technically 
to analyze rank-aware shrinkage rules as a preliminary step.

\smallskip
\noindent
\ObsOne\,\, \textit{Simultaneous block diagonalization.} \ (Lemmas
\ref{jointBlock-perlim} and \ref{lem:rank-r-decomp}). 
There exists a (random) basis $W$ such that
\begin{equation*}
  \begin{split}
    W' \Sigma W & = (\oplus_i A_i) \oplus I_{p-2r} \\
    W' \hat \Sigma_{\eta,r} W & = (\oplus_i B_i) \oplus I_{p-2r},
  \end{split}
\end{equation*}
where $A_i$ and $B_i$ are square blocks of equal size $d_i$, and $\sum
d_i=2r$. (Here and below, $A\oplus B$ denotes a
block-diagonal matrix with blocks $A$ and $B$).

\smallskip
\noindent
\ObsTwo\,\, \textit{Decomposable loss functions.} \ 
The loss functions \eqref{eq:4-losses} and many others studied below
satisfy
\begin{equation*}
  L_p(\Sigma,\hat{\Sigma}_{\eta,r} )=\sum_i L_{d_i}(A_i,B_i)
\end{equation*}
or the corresponding equality with {\em sum} replaced by {\em max}.

\smallskip
\noindent
\ObsThree\,\, \textit{Asymptotic deterministic loss.} \ 
(Lemmas \ref{lem:asymp-loss} and \ref{lem:rank-aware-lim}).
For rank-aware
estimators, when $\eta$ and $L$ are suitably continuous, almost surely
\begin{equation*}
  L_\infty(\ell_1, \ldots, \ell_r|\eta)
   = \lim_{p \to \infty} L_p(\Sigma,\hat \Sigma_{\eta,r}).
\end{equation*}

\smallskip
\noindent
\ObsFour\,\, \textit{Asymptotic equivalence of losses.} \ 
(Proposition \ref{prop:removing-rank-aware}).
Conclusions derived for rank-aware estimators
(\ref{eq:rank-aware}) carry over to the original estimators
(\ref{eq:scalar-shrink}) because, under suitable conditions
\begin{equation*}
    L_p(\Sigma, \hat\Sigma_{\eta})
   - L_p(\Sigma, \hat\Sigma_{\eta,r}) \to_P 0.
\end{equation*}
This relies on the fact that in the \Spike\ model, the sample
noise eigenvalues $\lambda_{in}, i\geq r + 1$ ``stick to the bulk'' in
an appropriate sense.

\subsection{Organization of the paper} \
\label{sec:organization-paper}
For simplicity of exposition, we assume a
single spike, $r=1$, in the first half of the paper. 
\ObsOne, \ObsTwo \,and \ObsThree \, are developed respectively in 
Sections \ref{sec:block}, \ref{sec:DecomposableLoss} and
\ref{sec:asyloss}, arriving at an explicit
formula for the asymptotic loss of a shrinker.
Section \ref{sec:exampl-decomp-loss} 
illustrates the assumptions with our list of 26
decomposable matrix loss functions.
In Section \ref{sec:OptShrink} we
use the formula to characterize the asymptotically unique admissible
nonlinearity for any decomposable loss, provide an algorithm for computing the
optimal nonlinearity, and provide analytical formulas for many of the
26 losses.   Section
\ref{multspike:sec}  extends the results to the general case where $r>1$
spikes are present.  
We develop \ObsFour\,,  remove the rank-aware assumption and explore some new phenomena
that arise in cases where the optimal shrinker turns out to be discontinuous. 
In Section \ref{optimality:sec} we show, at least for
Frobenius and Stein losses, that our optimal univariate shrinkage
estimator, which applies the same scalar function to each  sample
eigenvalue, in fact asymptotically matches the performance of the best
{\em orthogonally-equivariant} covariance
estimator under assumption \Spike.
Section \ref{sigma:sec} extends 
to the more general spiked model with
$\Sigma_p =
  diag(\ell_1,\ldots,\ell_r,\sigma^2,\ldots,\sigma^2)$ for $\sigma>0$
  known or unknown.
Section \ref{discussion:sec}  discusses our results in light of the
 high-dimensional covariance estimation work of 
 El Karoui \cite{karoui2008spectrum} and Ledoit and Wolf \cite{ledoit2012nonlinear}.
  Some proofs and calculations 
are deferred to the supplementary article \cite{SI}, 
where we also evaluate and document the strong signal
(large-$\ell$) 
asymptotics of the optimal shrinkage estimators, and the asymptotic percent
improvement over naive hard thresholding of the sample covariance eigenvalues.
Additional technical details and software are provided in the
Code Supplement available online as a permanent URL from the Stanford Digital Repository \cite{SDR}.


\section{Simultaneous Block-Diagonalization} \label{sec:block}

We first develop \ObsOne\, in the simplest case, $r=1$, assumping a rank-aware
shrinker.
In general, the estimator $\hat \Sigma_\eta$ and estimand $\Sigma$ are
not simultaneously diagonalizable. 
However, in the particular case that both are rank-one perturbations
of the identity, we will see that simultaneous block diagonalization is
possible. 

Some notation is needed.
We denote the eigenvalues and eigenvectors of the spectral
decompostion $S_{n,p_n} = V\Lambda V'$ by
\begin{equation*}
    spec(S_{n,p_n})
    = [(\lambda_{1n}, \ldots, \lambda_{pn}), (v_{1n},\ldots,v_{pn})]\,.
\end{equation*}
Whenever possible, we supress the index $n$ and write e.g. $S$, $\lambda_i$ and $v_i$
instead. Similarly, we often write $\Sigma_{p}$ or even $\Sigma$ for
$\Sigma_{p_n}$.

\begin{lemma} \label{jointBlock-perlim}
Let $\Sigma$ and $\hat{\Sigma}$ be (fixed, nonrandom) $p$-by-$p$ 
symmetric positive definite matrices with
\begin{align}
  \label{eq:specSigma}
  spec(\Sigma) 
    & = [(\ell, 1, \ldots, 1), (u_1,\ldots,u_p)] \\
  \label{eq:specSigmaHat}
  spec(\hat \Sigma)
    & = [(\eta, 1, \ldots, 1), (v_1,\ldots,v_p)]. \\
\intertext{Let $c = \langle u_{1}, v_{1} \rangle$ and  $s = \sqrt{1 - c^2}$.
Then there exists an orthogonal matrix $W$, which depends on $\Sigma$ and
  $\hat{\Sigma}$, such that}
  W' \Sigma W 
     & = A(\ell) \oplus I_{p-2},  \label{eq:sig} \\    
  W' \hat \Sigma W 
     & = B(\eta, c) \oplus I_{p-2}, \label{eq:sighat}    
\end{align}
where the fundamental $2 \times 2$ matrices $A$ and $B$ are
  given by
\begin{align}
  A(\ell) 
     & =
     \begin{bmatrix}
       \ell & 0 \\
       0    & 1 
     \end{bmatrix},
     \qquad \quad
  B(\eta, c) 
      = I_2 + (\eta-1) 
      \begin{bmatrix} c \\ s  \end{bmatrix}
   \begin{matrix}
      \begin{bmatrix} c & s   \end{bmatrix}\\\mbox{}
   \end{matrix}\,\,.
         \label{eq:ABforms}
\end{align}
\end{lemma}
\begin{proof}
  Let $\Delta = \text{diag}(\eta,1,\ldots,1) = I + (\eta-1)e_1 e_1'$,
  where $e_1$ denotes the unit vector in the first co-ordinate
  direction.
It is evident that
\begin{equation} 
  \label{eq:rankone}
  \Sigma = I + (\ell -1) u_1 u_1', \qquad 
  \hat \Sigma = I + (\eta -1) v_1 v_1'.
\end{equation}
It is natural, then, to work in the ``common'' basis of $u_1$ and
$v_1$. We apply one step of Gram-Schmidt if we can, setting
\begin{equation*}
  z =
  \begin{cases}
    (v_1 - c u_1)/s & \quad \text{if } s \neq 0 \\
    u_p & \quad \text{if } s = 0.
  \end{cases}
\end{equation*}
In the second--exceptional--case, $v_1 = \pm u_1$, so we pick a
convenient vector orthogonal to $u_1$.
In either case, the columns of the $p \times 2$ matrix $W_2 = [u_1 \
z]$ are orthonormal and their span contains both $u_1$ and $v_1$. 
Now fill out $W_2$ to an orthogonal matrix 
$W = [W_2 \ \, W_2^\perp]$. 
Observe now that if $y$ lies in the column span of $W_2$ and $\alpha$
is a scalar, then necessarily
\begin{equation*}
  W'(I_p + \alpha y y') W 
   = (I_2 + \alpha \check y \check y) \oplus I_{p-2},
  \qquad \check y = W_2'y.
\end{equation*}
The expressions \eqref{eq:sig} -- \eqref{eq:ABforms} now follow from 
the rank one perturbation forms \eqref{eq:rankone} along with
\begin{equation*}
  W_2' u_1 =
  \begin{bmatrix}
    u_1'u_1 \\
    z' u_1
  \end{bmatrix} 
  =
  \begin{bmatrix}
    1 \\ 0 
  \end{bmatrix}, \quad \text{and} \quad 
    W_2' v_1 =
  \begin{bmatrix}
    u_1'v_1 \\
    z' v_1
  \end{bmatrix} 
  =
  \begin{bmatrix}
    c \\ s 
  \end{bmatrix}.   \qedhere
\end{equation*}

\end{proof}

\section{Decomposable Loss Functions} \label{sec:DecomposableLoss}

Here and below, by {\em loss function} $L_p$ we mean a function of two
$p$-by-$p$ positive semidefinite matrix arguments obeying $L_p\geq 0$, with
$L_p(A,B)=0$ if and only if $A=B$. A {\em loss family} is a sequence 
$L=\left\{ L_p \right\}_{p=1}^\infty$, one for each matrix size $p$. We often
 write loss function and refer to the entire family.
\ObsTwo\, calls out a large class of loss
functions which naturally exploit the simultaneously block-diagonalizability
property of Lemma \ref{jointBlock-perlim};
we now develop this observation.

\begin{defn} \label{def-OrthInvar}  {\bf Orthogonal Invariance.}
  We say the loss function $L_p(A,B)$ is {\em orthogonally invariant} if
  for each orthogonal $p$-by-$p$ matrix $O$,
\[L_p(A,B) = L_p(OAO',OBO').\]
\end{defn}

For given $p$ and a given sequence of block sizes $\{d_i\}$ such that
$\sum_i d_i = p$,  
consider block-diagonal matrix decompositions of $p$ by $p$ matrices $A$ and $B$ 
into blocks $A^i$ and $B^i$ of size $d_i$:
\begin{equation}
  \label{eq:decomps}
  A = \oplus_i A^i \qquad \qquad
  B = \oplus_i B^i.
\end{equation}

\begin{defn} \label{def-SumDecomp} {\bf Sum-Decomposability and
  Max-Decomposability.}
We say the loss function $L_p(A,B)$ 
is {\em sum-decomposable} 
if for all decompositions \eqref{eq:decomps},
\[
 L_p(A,B) = \sum_i L_{d_i}(A^i,B^i) \,.
\]
We say that it is {\em max-decomposable} if
if for all decompositions \eqref{eq:decomps},
\[
 L_p(A,B) = \max_i  L_{d_i}(A^i,B^i)\,.
\]
\end{defn}

Clearly, such loss functions can exploit the simultaneous block diagonalization
of Lemma \ref{jointBlock-perlim}. Indeed,
\begin{lemma}  \label{lem:asympDecomp}
{\bf Reduction to Two-Dimensional Problem.}
Consider an orthogonally invariant loss function, $L_p$, 
which is sum- or max-decomposable. 
Suppose that $\Sigma$ and $\hat \Sigma$ satisfy (\ref{eq:specSigma})
and (\ref{eq:specSigmaHat}) respectively. Then
\[
  L_p(\Sigma,\hat{\Sigma})
 = L_2(A(\ell),B(\eta,c)).
\]
\end{lemma}
\begin{proof}
Lemma \ref{jointBlock-perlim} provides a change of basis 
$W$ yielding decompositions
(\ref{eq:sig}) and (\ref{eq:sighat}). 
From the invariance and decomposability hypotheses,
\begin{align*}
  L_p(\Sigma, \hat \Sigma) 
  & = L_p( W' \Sigma W, W' \hat \Sigma W) \\
  & = L_p(A(\ell)\oplus I_{p-2}, B(\eta),c)\oplus
I_{p-2}) \\
  & = L_2(A(\ell),B(\eta,c)).  \qquad \qquad  \qedhere
\end{align*} 
\end{proof}


\section{Asymptotic Loss in the Spiked Covariance Model} \label{sec:asyloss}

Consider the spiked model with a single spike, $r=1$, namely, make
assumptions \Asy\, and \OneSpike. 
The principal $2 \times 2$ block estimator occurring in Lemmas
\ref{jointBlock-perlim} and \ref{lem:asympDecomp} is
$B(\eta(\lambda_{1n}),c_{1n})$ 
where $\lambda_{1n}$ is the largest eigenvalue of $S_n$ and 
$c_{1n} = \langle u_{1n}, v_{1n} \rangle$. 

If $\eta$ is continuous, then the convergence results
(\ref{eig_displacement:eq}) and (\ref{eq:uv-incon})
imply that the principal block converges as $n\to\infty$. Specifically,
\begin{equation}
  \label{eq:p-block-cge}
  B(\eta(\lambda_{1n}),c_{1n})
  \aslim
  B(\eta(\lambda(\ell)), c(\ell)) 
   =: B(\ell, \eta),
\end{equation}
say, with the convergence occurring in all norms on $2 \times 2$ matrices.

In accord with \ObsThree, we now show that the asymptotic loss 
\eqref{asyloss-intro:eq}
is a deterministic, explicit function of the population spike $\ell$.
For now, we will continue to assume that the shrinker $\eta$ is rank-aware.
Alternatively, we can make a different simplifying assumption on $\eta$, which
will be useful in what follows:

\begin{defn}  \label{def:bulk-shrinkers}
  We say that a scalar function $\eta: [0, \infty) \to [1, \infty)$ is
a \textit{bulk shrinker} if $\eta(\lambda) = 1$ when $\lambda \leq
\lambda_+(\gamma)$, and 
a \textit{neighborhood bulk shrinker} if for some $\epsilon > 0$,
$\eta(\lambda) = 1$ whenever $\lambda \leq
\lambda_+(\gamma) + \epsilon.$
\end{defn}

The neighborhood bulk shrinker condition on $\eta$
is rather strong, but does hold
for $\eta_*^N$ in (\ref{eq:four-solns}), for example.
(Note that our definitions ignore the lower bulk edge
$\lambda_-(\gamma)$, which is of less interest in the spiked model.)




\begin{lemma}
  \label{lem:asymp-loss}
  {\bf A Formula for the Asymptotic Loss.} 
Adopt models \Asy\, and \OneSpike\, with
$\ell > \ell_+(\gamma)$. 
Suppose (a) that the family $L=\left\{ L_p \right\}$ of loss functions is
orthogonally invariant and sum- or max- decomposable, and that
$B \mapsto L_2(A, B)$ is continuous. 
Let $\hat\Sigma_\eta=\hat{\Sigma}_\eta(S_{n,p_n})$ be given by 
(\ref{eq:scalar-shrink}), and let $\hat{\Sigma}_{\eta,1}$
be the corresponding rank-aware shrinkage rule (\ref{eq:rank-aware}) for $r=1$.
Suppose the scalar nonlinearity $\eta$ is
continuous on $\left( \lambda_+(\gamma),\infty \right).$
Then
\begin{equation} \label{eq:asy-formula}
  L_{p_n}(\Sigma_{p_n}, \hat \Sigma_{\eta,1}) 
  \aslim 
  L_2(A(\ell), B(\ell, \eta))\,,
\end{equation}
 Furthermore,
if (b) $\eta$
is a neighborhood bulk shrinker, then 
$ L_{p_n}(\Sigma_{p_n}, \hat \Sigma_{\eta}) $ also has this limit a.s.

\end{lemma}
\noindent Each of the 26 losses considered in this paper satisfies
conditions (a).
\begin{proof}
In the rank-aware case $\hat{\Sigma}_\eta = \hat{\Sigma}_{\eta,1}$
satisfies 
\begin{equation*}
  \text{spec}(\hat \Sigma_\eta) = [(\eta(\lambda_{1n}),1, \ldots, 1),
  (v_{1n}, \ldots, v_{pn})],
\end{equation*}
Lemma \ref{lem:asympDecomp} implies that 
\begin{align*}
  L_p(\Sigma, \hat \Sigma_\eta) 
     = L_2(A(\ell), B(\eta(\lambda_{1n}),c_{1n}))  
     \aslim L_2(A(\ell), B(\ell, \eta))\,,
\end{align*}
where the limit on the right hand side follows from 
convergence 
(\ref{eq:p-block-cge}) and the
assumed continuity of $L_2$.
  
Now assume that $\eta$ is a neighborhood bulk shrinker. 
From (\ref{eig_displacement:eq}) we know that $\lambda_{1n} \aslim
\lambda(\ell)$  
From eigenvalue interlacing (see \eqref{eq:interlace} below) we have
$\lambda_{2n} \leq \mu_{1n}$, 
where $\mu_{1n}$ is the largest eigenvalue of a white Wishart matrix
$W_{p_n -1}(n,I)$, and satisfies 
$\mu_{1n} \aslim \lambda_+$, from \cite{gema80}.
Let $\epsilon > 0$  be small enough that $\lambda_+ + \epsilon <
\lambda(\ell)$ and also
lies in the neighborhood shrunk to $1$ by $\eta$.
Hence, there exists 
a random variable $\hat n$ such that
almost surely,
$\lambda_{2n} < \lambda_+ + \epsilon < \lambda_{1n}$
 for all $n > \hat n$. 
For such $n$, the first display above of this proof applies and we
then obtain the second display as before.
\end{proof}

\section{Examples of Decomposable Loss Functions}
\label{sec:exampl-decomp-loss}

Many of the loss functions that appear in the literature 
are {\em Pivot-Losses}. 
They can
be obtained via the following common recipe:

\begin{defn} \label{def-pivot} {\bf Pivots.}  A {\em matrix pivot} is
  a matrix-valued function $\Delta(A, B)$ of 
  two real positive definitee matrices $A, B$ such that: (i)
  $\Delta(A,B)=0$ if and only 
  if $A=B$, (ii) $\Delta$ is orthogonally equivariant and (iii) 
  $\Delta$ respects block structure in the sense that
  \begin{align}
    \label{eq:p-inv}
    \Delta(OAO', OBO') & = O \Delta(A,B) O', \\
    \Delta( \oplus A^i, \oplus B^i) & = \oplus \Delta(A^i,B^i)
    \label{eq:p-block}
  \end{align}
for any orthogonal matrix $O$ of the appropriate dimension.
\end{defn}

Matrix pivots can be symmetric-matrix valued, for example $\Delta(A,B) = A - B$,
but need not be, for example $\Delta(A,B) = A^{-1} B - I$. 

\begin{defn} {\bf Pivot-Losses.}
  Let $g$ be a non-negative function
  of a symmetric matrix variable that is definite: $g(A)=0$ if and
  only if $A=0$, and orthogonally invariant:
$g(O \Delta O') = g(\Delta)$ for any orthogonal matrix $O$. 
A symmetric-matrix valued pivot $\Delta$ induces an
orthgonally-invariant \textit{pivot loss}
\begin{equation}
  \label{eq:generic-loss} L(A,B) = g(\Delta(A,B)).
\end{equation} 
More generally, for any matrix pivot $\Delta$, set 
$|\Delta| = (\Delta'\Delta)^{1/2}$ and define 
\begin{equation}
  \label{eq:asymmetric-loss}
  L(A,B) = g(|\Delta|(A,B)).
\end{equation}
\end{defn}

An orthogonally invariant function $g$ depends on its matrix argument
$\Delta$ or $|\Delta|$ only 
through its eigenvalues or singular values
$\delta_1,\ldots,\delta_p$. 
We abuse notation to write $g(\Delta)=g(\delta_1,\ldots,\delta_p)$.
Observe that if $g$ has either of the forms 
\begin{displaymath} \label{decom:eq}
  g(\delta_1,\ldots,\delta_p) =
  \sum_j g_1(\delta_j) \qquad \text{or} \qquad g(\delta_1,\ldots,\delta_p) = \max_j g_1(\delta_j),
\end{displaymath}
for some univariate $g_1$,
then the pivot loss $L(A,B)=g(\Delta(A,B))$ (symmetric pivot) or
$L(A,B)=g(|\Delta|(A,B))$ (general
pivot) is respectively sum- or max-decomposable. 
In case $\Delta$ is symmetric, the two definitions agree so long as
$g_1$ is an even function of $\delta$.
%

\subsection{Examples of Sum-Decomposable Losses}


There are different strategies to derive sum-decomposable pivot-losses.
First, we can use statistical discrepancies between the Normal distributions 
$\Nc(0,A)$
and $\Nc(0,B)$:

\begin{enumerate}
\item {\it Stein Loss} \cite{stein1956,dey1985estimation,konno1991estimation}:  
Stein's Loss 
is defined as 
\[
  L^{st} (A,B) = \tr(A^{-1}B - I) - \log( \det(B)/\det(A)).
\]
This is just twice the Kullback distance $D_{KL}( \Nc(0,B)|| \Nc(0,A))$.
Stein's loss is a pivot-loss with respect to
$\Delta(A,B) = A^{-1/2} B A^{-1/2}$ and 
$g(\Delta)=\tr(\Delta - I) - \log \det(\Delta)=\sum_i
g_1(\delta_i)$, 
where $g_1(\delta) = \delta - 1 - \log \delta.$

\item {\it Entropy/Divergence Losses:}
Because the Kullback discrepancy is not symmetric in its arguments,
we may consider two other losses: 
reversing the arguments we get {\it Entropy} loss
 $L^{ent}(A,B)  = L^{st}(B,A)$
\cite{SinhaGhosh,CJS:CJS91} and  summing the Stein and Entropy losses gives
{\it divergence}  loss:  \[L^{div}(A,B)  = L^{st}(A,B) + L^{st}(B,A)
  = \tr(A^{-1}B-I) + \tr(B^{-1}A - I),\]
see \cite{kubokawa1990estimating,gupta1995improved}.
Each can be shown to be sum-decomposable, following the same
argument as above.

\item {\it Bhattarcharya/Matusita Affinity} \cite{kailath1967divergence,matsusita1967}:
Let 
\[
  L^{\it aff}(A,B) = \tfrac{1}{2} \log \frac{|A+B|/2}{|A|^{1/2} |B|^{1/2}}\,.
\]
This measures the statistical distinguishability
of $\Nc(0,A)$ and $\Nc(0,B)$ based on independent observations,
since $L^{aff} = \frac{1}{2} \log(\int \sqrt{\phi_A}\sqrt{\phi_B})$
with $\phi_A$ and $\phi_B$ 
the densities of $\Nc(0,A)$ and $\Nc(0,B)$. Hence convergence of affinity
loss to zero 
is equivalent to convergence of the underlying densities in Hellinger or Variation distance.
This is a pivot-loss w.r.t
$\Delta(A,B)  = A^{-1/2} B A^{-1/2} $ and
\[
g(\Delta) = \tfrac{1}{4} \log(\det(2I + \Delta + \Delta^{-1})/4) = \sum_i
g_1(\delta_i),
\]
as is seen by setting 
$C = A^{-1/2}(A+B)B^{-1/2} $ and noting that $C'C = (2I + \Delta +
\Delta^{-1})$. Here, $g_1(\delta) = \tfrac{1}{4} \log(2 + \delta +
\delta^{-1})/4$. 

\item {\it Fr\'{e}chet Discrepancy} \cite{olkin1982distance,dowson1982frechet}:
Let $L^{fre}(A,B) = \tr(A+B - 2 A^{1/2}B^{1/2})$.
This measures the minimum possible mean-squared difference between zero-mean random vectors
with covariances $A$ and $B$ respectively. 
This is a pivot-loss w.r.t $\Delta(A,B) = A^{1/2}-B^{1/2}$, and
$g(\Delta) = \tr (\Delta^2)=\sum_i g_1(\delta_i)$ with 
$g_1(\delta) = \delta^2$.
\end{enumerate}

Second, we may obtain sum-decomposable pivot-losses $L(A,B)=g(\Delta(A,B))$ by
simply 
taking $g$ to be one of the standard matrix norms: 

\begin{enumerate}
\item {\it Squared Error Loss} \cite{james1961estimation,chen2010shrinkage,ledoitPeche,ledoit2012nonlinear}:
Let $L^{F,1}(A,B) = \| A - B \|_F^2$. This is a pivot-loss w.r.t $\Delta(A,B) = A - B$
and $g(\Delta) = \tr \Delta'\Delta=\sum_i g_1(\delta_i)$ with $g_1(\delta) = \delta^2$. 

\item {\it Squared Error Loss on Precision} \cite{haff1979estimation}:
Let $L^{F,2}(A,B) = \| A^{-1} - B^{-1} \|_F^2$.
This is a pivot-loss w.r.t $\Delta(A,B) = A^{-1} - B^{-1}$ and 
 $g(\Delta) = \tr \Delta'\Delta$.

\item {\it Nuclear Norm Loss.} 
Let $L^{N,1}(A,B) = \| A -B\|_*$ where $\| \Delta \|_*$ denotes the
nuclear norm of the matrix $\Delta$, i.e. the sum of its singular values.
This is a pivot-loss w.r.t $\Delta(A,B) = A - B$ and
$g(\Delta)=\sum_i |\delta_i|$.
\item Let $L^{F,3}(A,B)=\| A^{-1}B-I\|_F^2$. This is a pivot-loss w.r.t
   $\Delta(A,B)=A^{-1} B - I$. It
was studied in \cite{selliah1964estimation,haff1980empirical,Sharma1985} and later work.
\item Let $L^{F,7}(A,B)=\|\log(A^{-1/2}BA^{-1/2})\|_F^2$,
where $\log()$ denotes
the matrix logarithm\footnote{The matrix logarithm transfers the matrices from the Riemannian manifold
of symmetric positive semidefinite matrices to its tangent space at $A$.  
It can be shown that $L^{F,7}$ is the squared geodesic distance in this
manifold.
This metric between covariances has attracted attention, for example, in
diffusion tensor MRI \cite{lenglet2006statistics,dryden2009}.}
\cite{forstner1999metric,lenglet2006statistics}.
This is a pivot-loss w.r.t  \[\Delta(A,B)=\log(A^{-1/2}BA^{-1/2})\,.\]
\end{enumerate}

\subsection{Examples of Max-Decomposable Losses}

Max-decomposable losses arise by applying the operator norm (the maximal
singular value or eigenvalue of a matrix) to a
suitable pivot. 
Here are a few examples:

\begin{enumerate}
\item {\it Operator Norm Loss}  \cite{karoui2008operator}:
  Let $L^{O,1}(A,B) = \| A - B \|_{op}$. This is a pivot-loss 
w.r.t $\Delta(A,B) = A - B$ and $g(\Delta)=\|\Delta\|_{op} = 
\max_i \delta_i$.

\item {\it Operator Norm Loss on Precision:} 
Let $L^{O,2}(A,B) = \| A^{-1} - B^{-1} \|_{op}$. This is a pivot-loss w.r.t.
$\Delta(A,B)= A^{-1} - B^{-1}$.

\item {\it Condition Number Loss}:
Let $L^{O,7}(A,B) = \|  \log(A^{-1/2}BA^{-1/2})  \|_{op}$. This is a pivot-loss
w.r.t $\Delta(A,B) =   \log(A^{-1/2}BA^{-1/2})$, related to  \cite{won2012condition}.
In the spiked model discussed below, $L^{O,7}$ effectively measures
the condition number of $A^{-1/2}BA^{-1/2}$. 
\end{enumerate}

We adopt the systematic naming scheme
$L^{\mbox{norm},\mbox{pivot}}$ where $\mbox{norm} \in \{F,O,N\}$, and
$\mbox{pivot} \in \{ 1, \dots , 7 \}$. This set of 21 combinations
covers the previous matrix norm examples and adds some more. 
 Together with Stein's loss and the
others based on statistical discrepancy mentioned above,  we
arrive at a set of 26 loss functions, Table
\ref{Table:lossnames}, to be studied in this paper.


\begin{table}[h]
\begin{center}
\begin{tabular}{|c|c|c|c|}
\hline
         & \multicolumn{3}{|c|}{MatrixNorm} \\
\hline
Pivot  & Frobenius & Operator &  Nuclear  \\
\hline
$A-B$  & $L^{F,1}$ & $L^{O,1}$& $L^{N,1}$\\		    
   $A^{-1} - B^{-1} $ & $L^{F,2}$ & $L^{O,2}$ & $L^{N,2}$ \\
   $ A^{-1}B -I $  	& $L^{F,3}$ & $L^{O,3}$ & $L^{N,3}$ \\
   $B^{-1}A -I$    	& $L^{F,4}$ & $L^{O,4}$ & $L^{N,4}$ \\
   $ A^{-1}B + B^{-1}A-2I $ & $L^{F,5}$ & $L^{O,5}$ & $L^{N,5}$ \\
   $ A^{-1/2}BA^{-1/2} - I$  & $L^{F,6}$ & $L^{O,6}$ & $L^{N,6}$ \\
   $\log(A^{-1/2}BA^{-1/2}) $& $L^{F,7}$ & $L^{O,7}$ & $L^{N,7}$ \\
\hline
 & \multicolumn{3}{|c|}{Statistical Measures} \\

         & St &  Ent & Div \\
          \hline 
Stein & $L^{st}$ & $L^{ent}$ & $L^{\it div}$ \\
\hline
Affinity & \multicolumn{3}{|c|}{$L^{\it aff}$} \\
Fr\'echet & \multicolumn{3}{|c|}{$L^{\it fre}$} \\
\hline
\end{tabular}
\caption{Systematic notation for the 26 loss functions considered in this paper.}
\label{Table:lossnames}
\end{center}
\end{table}



\section{Optimal Shrinkage for Decomposable Losses} \label{sec:OptShrink}

\subsection{Formally Optimal Shrinker}

Formula \eqref{eq:asy-formula} for the asymptotic loss has only been
shown to hold in the single spike model and only for a certain class
of nonlinearities $\eta$.  In fact, the same is true in the $r$-spike
model and for a much broader class of nonlinearities $\eta$.  To
preserve the narrative flow of the paper, we defer the proof, which is
more technical, to Section \ref{multspike:sec}. Instead, we proceed
under the single spike model, and simply assume
that $L_\infty(\ell|\eta)$ from \eqref{eq:asy-formula}
is the correct limiting loss, and draw conclusions on the optimal
shape of the shrinker $\eta$.


\begin{defn} \label{def:Optimal} {\bf Optimal Shrinker.}
  Let $L=\left\{ L_p \right\}_{p=1}^\infty$ be  a given loss family and let
 $L_\infty(\ell|\eta)$ be the asymptotic loss corresponding to a nonlinearity
 $\eta$, as defined in \eqref{eq:asy-formula}, under assumption \Asy\,.
 If $\eta^*$ satisfies
\begin{equation} \label{eq:OptEta}
    L_\infty(\ell| \eta^* ) =  \min_{\eta} L_\infty(\ell| \eta ), \qquad \forall
    \ell \geq 1\,,
\end{equation}
and for any $\eta\neq \eta^*$ there exists $\ell\geq 1$ with 
$L_\infty(\ell,\eta^*) < L_\infty(\ell,\eta)$, then
we say that $\eta^*$ is the {\em formally optimal}
shrinker for the loss family $L$ and shape factor $\gamma$, and denote
the corresponding 
shrinkage rule by $\lambda\mapsto \eta^*(\lambda\,;\,\gamma,L)$.
 \end{defn}

Below, we call formally optimal shrinkers simply ``optimal''.
By definition, the optimal shrinkage rule $\eta^*(\lambda\,;\,\gamma,L)$ 
is the unique admissible rule, in the asymptotic sense, among rules 
of the form $\hat{\Sigma}_\eta(S_{n,p}) = V \eta(\Lambda) V'$
in the single-spike model. 
In the single spiked model (and as we show
later, generally in the spiked model) one never regrets using the optimal
shrinker over any other (reasonably regular) univariate shrinker. 
%
In light of our results so far, an obvious characterization of an optimal shrinker is as follows.

\begin{thm}  \label{thm:Optimal} {\bf Characterization of Optimal Shrinker.}
Let $L=\left\{ L_p \right\}_{p=1}^\infty$ be  a loss family.
Define
\begin{eqnarray} \label{opt_obj1:eq}
F(\ell,\eta) = L_2 \big ( \begin{bmatrix}\ell & 0 \\ 0  &1 \end{bmatrix},
                                        \begin{bmatrix} 1+(\eta-1)c^2  & (\eta-1)cs  \\ (\eta-1)cs & 1+(\eta-1) s^2 \end{bmatrix}
                            \big )  \,.
\end{eqnarray}
Here, $c=c(\ell)$ and $s=s(\ell)$ satisfy $c^2(\ell) = \frac{1 - \gamma/(\ell-1)^2}{1+\gamma/(\ell-1)}$ and
$s^2(\ell) = 1-c^2(\ell)$.
Suppose that for any $\ell > \ell_+(\gamma)$, there exists a unique
 minimizer 
\begin{eqnarray} \label{opt:eq}
  \eta^*(\ell) :=\text{argmin}_{\eta \geq 1} F(\ell,\eta) \,.
\end{eqnarray}
Further suppose that for every $1\leq \ell \leq \ell_+(\gamma)$ we have
$ \text{argmin}_{\eta \geq 1} G(\eta) = 1$, where
\begin{eqnarray} \label{opt_obj2:eq}
G(\ell,\eta) = L_2 \big ( \begin{bmatrix} \ell & 0 \\ 0  & 1 \end{bmatrix},
                                       \begin{bmatrix} 1 & 0 \\ 0  & \eta \end{bmatrix} 
                            \big ) \,.
\end{eqnarray}
Then the shrinker 
\[
  \eta^*(\lambda) = \begin{cases} 
    \eta^*(\ell(\lambda)) & \,\, \ell> \lambda_+(\gamma) \\
    1 & 1\leq \ell \leq \lambda_+(\gamma)
  \end{cases}\,,
\]
where $\ell(\lambda)$ is given by
\eqref{ell_of_lambda:eq}, is the optimal shrinker of the loss family $L$.

\end{thm}

Many of the 26 loss families discussed in Section
\ref{sec:DecomposableLoss} admit a closed form expression for the optimal shrinker; see Table \ref{Table:formulas}. For
others, we computed the optimal
shrinker numerically, by implementing in software a solver for the simple 
scalar optimization problem
 \eqref{opt:eq}.
Figure \ref{fig:OptimalShrinkers}
portrays the  optimal shrinkers for our 26 loss functions. We refer readers
interested in computing
specific individual shrinkers to our reproducibility advisory at the
bottom of this paper, and invite the reader to explore the code supplement \cite{SDR},
consisting of online resources and code we offer.

\begin{figure}[h!]
\begin{center}
  \includegraphics[width=6.5in,clip]{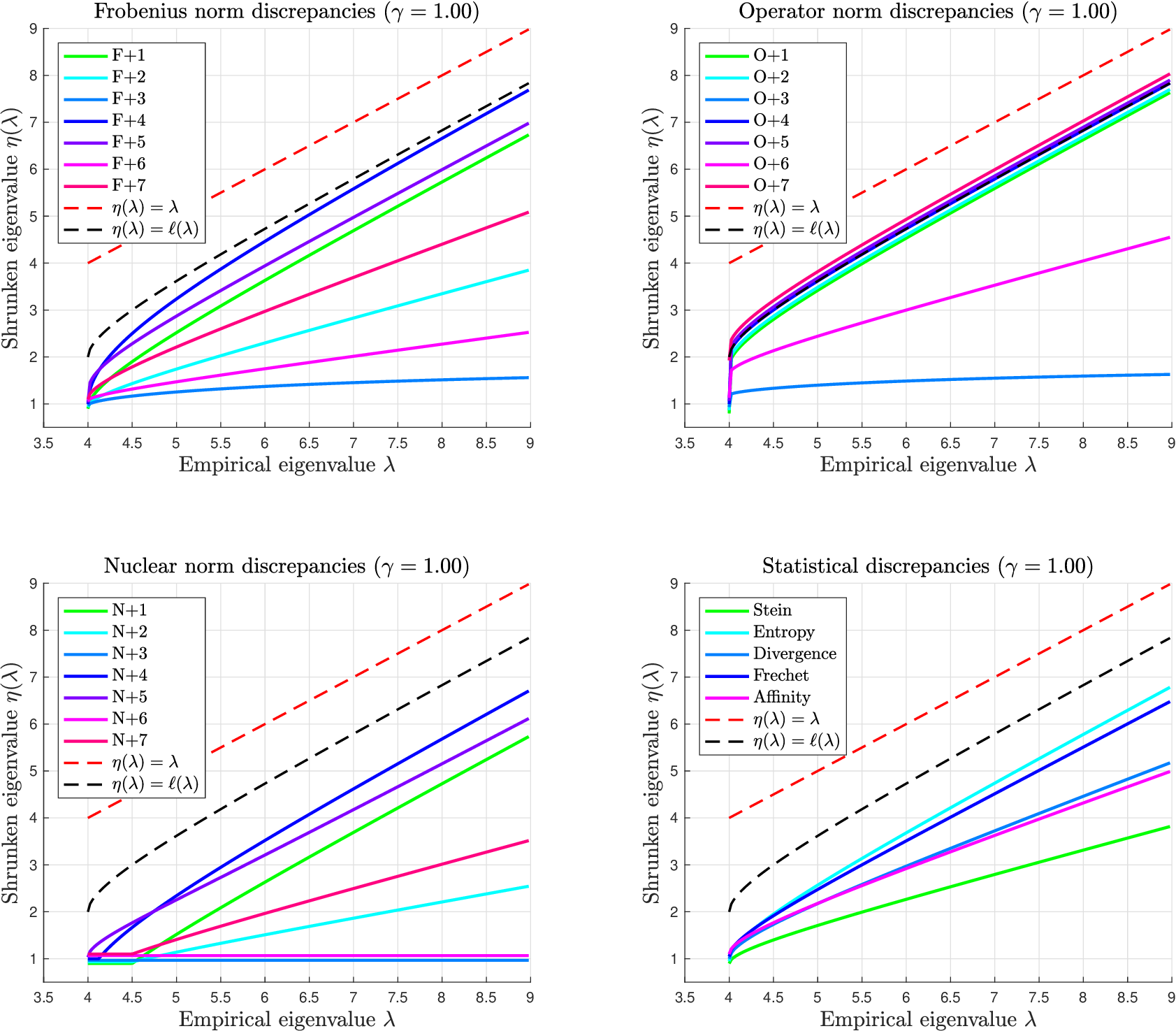}
\caption{Optimal Shrinkers for 26 Component Loss Functions for  $\gamma=1$ and
  $4 \leq \lambda \leq 10$.
Upper Left: Frobenius-norm-based losses; Lower Left: Nuclear-Norm based losses;
Upper Right: Operator-norm-based losses;  Lower Right: Statistical Discrepancies.
(Color online; curves jittered in vertical axis to avoid overlap.)
The supplemental article 
\cite{SI} contains an larger version of these plots.
 {\it Reproducibility advisory:} 
The code supplement \cite{SDR}
includes a script that reproduces any one of these individual curves.
 }
\label{fig:OptimalShrinkers}
\end{center}
\end{figure}

\subsection{Optimal Shrinkers Collapse the Bulk}

We first observe that, for any of the 26 losses considered, the optimal shrinker
collapses the bulk to $1$. The following lemma is proved in the supplemental article 
\cite{SI}:

\begin{lemma} \label{bulk:lem}
  Let $L$ be any of the 26 losses mentioned in Table \ref{Table:lossnames}. Then
  the rule $\eta^{**}(\ell)= 1$ is unique asymptotically
  admissible on $[1,\ell_+(\gamma)]$, namely,  for 
  every $\ell\in[1,\ell_+(\gamma)]$ we have
$\E L(\ell,\eta)\geq
  L(\ell,\eta^{**})$,
  with strict inequality for at least one point in $[1,\ell_+(\gamma)]$.
\end{lemma}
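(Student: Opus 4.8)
The plan is to reduce the assertion to the scalar minimization $\min_{\eta\ge 1}G(\eta,\ell)$ from Definition~\ref{def:Optimal}, and then dispatch it in one line using decomposability. For $1\le\ell<\ell_+(\gamma)$ the spike lies below the phase transition $\ell_+(\gamma)=1+\sqrt{\gamma}$, so the top empirical eigenvalue satisfies $\lambda_{1,n}\goto_P\lambda_+(\gamma)$ while the cosine satisfies $c_{1,n}\goto_P 0$ (equivalently $c(\ell)=0$ in \eqref{c_of_ell:eq}). Hence, for any continuous competing nonlinearity $\eta$, the principal $2\times 2$ block $B(\eta(\lambda_{1,n}),c_{1,n},s_{1,n})$ of \eqref{B_def:eq} converges in probability to $B(\eta(\lambda_+(\gamma)),0,1)=diag(1,\eta(\lambda_+(\gamma)))$, so the asymptotic (and expected) loss at this $\ell$ equals $G(\eta(\lambda_+(\gamma)),\ell)=L_2\bigl(diag(\ell,1),diag(1,\eta(\lambda_+(\gamma)))\bigr)$, exactly the object in \eqref{opt_obj2:eq}; whereas $\eta^{**}\equiv 1$ gives $\hat{\Sigma}_{\eta^{**}}=I_{p_n}$ and hence, by orthogonal invariance and decomposability, loss exactly $L_2(diag(\ell,1),I_2)=G(1,\ell)$ for every $n$. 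It therefore suffices to prove that, for each $\ell\in[1,\ell_+(\gamma))$, the map $\eta\mapsto G(\eta,\ell)$ on $[1,\infty)$ is minimized at $\eta=1$, with strict inequality at at least one such $\ell$.

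The key step uses that all 26 losses of Table~\ref{Table:lossnames} are orthogonally invariant and either sum-decomposable or max-decomposable (this is checked loss by loss in Section~\ref{sec:DecomposableLoss}), together with the elementary facts that $diag(\ell,1)$ and $diag(1,\eta)$ are simultaneously diagonal and, for $\ell,\eta\ge 1$, invertible (the latter being all that the precision- and Stein-type pivots require). Decomposing into $1\times 1$ blocks then yields, with $L_1$ the corresponding scalar loss,
\[
  G(\eta,\ell)=L_1(\ell,1)+L_1(1,\eta)\qquad\text{or}\qquad G(\eta,\ell)=\max\bigl(L_1(\ell,1),L_1(1,\eta)\bigr)
\]
for the sum- and max-decomposable families respectively. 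In either formula the term $L_1(\ell,1)$ is independent of $\eta$, while $L_1(1,\eta)\ge 0$ with equality exactly at $\eta=1$, by the defining property $L_p(A,B)=0\iff A=B$. Thus $G(\cdot,\ell)$ attains its minimum over $[1,\infty)$ at $\eta=1$ for every $\ell\in[1,\ell_+(\gamma))$, which is precisely the inequality $\E L(\ell,\eta)\ge L(\ell,\eta^{**})$ (the expectation on the left being present only because, for finite $n$, the loss of a generic $\eta$ is random while that of $\eta^{**}$ is deterministic) and, via Definition~\ref{def:Optimal}, says that the optimal shrinker collapses the bulk to $1$.

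For the strictness clause, take $\ell=1$: then $diag(\ell,1)=I_2$, so $L_1(1,1)=0$ and $G(\eta,1)=L_1(1,\eta)>0=G(1,1)$ for every $\eta>1$, in both the sum and the max case; moreover for the sum-decomposable losses the strict inequality holds at every $\ell\in[1,\ell_+(\gamma))$ as soon as $\eta>1$. This proves the lemma. The only part that requires any real care is the reduction in the first paragraph --- pinning down that, at a subcritical $\ell$, the limiting loss depends on the competitor $\eta$ only through the single value $\eta(\lambda_+(\gamma))$, which relies on the phase-transition facts $\lambda_{1,n}\goto_P\lambda_+(\gamma)$ and $c_{1,n}\goto_P 0$ together with continuity; once $G$ is identified, the remainder is merely the bookkeeping of confirming that every one of the 26 losses fits the sum/max-decomposable template.
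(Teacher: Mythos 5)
Your proof is correct and reaches the same conclusion, but the central step --- showing that $G(\eta,\ell)$ in \eqref{opt_obj2:eq} splits into one piece depending only on $\eta$ and one depending only on $\ell$ --- is argued by a genuinely different and, in my view, cleaner route than the paper's. The paper asserts that a decomposition $G(\eta,\ell)=g_1(\eta)+g_2(\ell)$ (or $\max\{g_1(\eta),g_2(\ell)\}$), with $g_1,g_2$ continuous, strictly increasing and vanishing at $1$, ``can be easily directly verified for each of our 26 losses in turn,'' so the burden there is case-by-case. You instead observe that $diag(\ell,1)$ and $diag(1,\eta)$ are already simultaneously diagonal, apply the sum/max-decomposability axioms of Section~\ref{sec:DecomposableLoss} at the level of $1\times 1$ blocks, and obtain the uniform identities $G(\eta,\ell)=L_1(\ell,1)+L_1(1,\eta)$ or $\max\bigl(L_1(\ell,1),L_1(1,\eta)\bigr)$ for all 26 families at once; the needed positivity $L_1(1,\eta)>0$ for $\eta>1$ then comes for free from the loss-function axiom $L_p(A,B)=0\iff A=B$, so you do not even need strict monotonicity. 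This buys a unified treatment in place of 26 verifications, and a one-line strictness argument at $\ell=1$ in place of the paper's continuity argument for $D(\ell)$ on a subinterval. Two minor remarks: your opening paragraph re-derives why $G$ is the relevant object from the subcritical asymptotics $\lambda_{1,n}\to_P\lambda_+(\gamma)$, $c_{1,n}\to_P 0$; the paper simply takes $G$ as given in Definition~\ref{def:Optimal}, so that scaffolding, while correct, is extra. Also, the paper treats $\eta$ as a random variable (the shrunk eigenvalue $\eta(\lambda_{1,n})$) and bounds $\E G(\ell,\eta)$ directly, whereas you pass to the deterministic limit $\eta(\lambda_+(\gamma))$ first; both are fine here, but the paper's formulation tracks more explicitly where the $\E$ in the lemma statement enters.
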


As part of the proof of Lemma 
4, in Table 
6 in the 
supplemental article  \cite{SI}, we explicitly calculate the fundamental loss
function  $G(\ell,\eta)$ of \eqref{opt_obj2:eq} for many of the loss families
discussed in this paper.
\\~\\
To determine the optimal shrinker $\eta^*(\lambda\,;\,\gamma,L)$ for each of our
loss functions $L$, it
therefore remains to determine the map $\lambda\mapsto \eta^*(\lambda)$ or
equivalently $\ell\mapsto \eta^*(\lambda(\ell))$ 
 only for $\ell>\ell_+(\gamma)$. This is our next
task.

\subsection{Optimal Shrinkers by Computer} 

The scalar optimization problem \eqref{opt:eq} is easy to solve numerically, so that
one can always compute the optimal shrinker at any desired value $\lambda$.
In the code supplement \cite{SDR} we provide Matlab code to compute the optimal nonlinearity for each of the
26 loss families discussed. In the sibling problem of singular value shrinkage
for matrix denoising,   \cite{GavishDonohoSingShrink} demonstrates numerical
evaluation of optimal shrinkers for the Schatten-$p$ norm, where analytical
derivation of optimal shrinkers appears to be impossible.

\subsection{Optimal Shrinkers in Closed Form}

We were able to obtain simple analytic formulas for the optimal shrinker
$\eta^*$ in each of 18 
 loss families from  Section \ref{sec:DecomposableLoss}. While 
the optimal shrinkers are of course functions of the empirical eigenvalue
$\lambda$, in the interest of space, we state the lemmas and provide the formulas in terms of 
 the quantities $\ell$, $c$ and $s$. To calculate any
of the nonlinearities below for a specific empirical eigenvalue $\lambda$, use the
following procedure:
\begin{enumerate}
  \item If $\lambda\leq \lambda_+(\gamma)$ set $\eta^*(\lambda)=1$. Otherwise:
  \item Calculate  $\ell(\lambda)$ using \eqref{ell_of_lambda:eq}.
  \item Calculate $c(\lambda)=c(\ell(\lambda))$ using
    (\ref{c_of_ell:eq}) and
    \eqref{ell_of_lambda:eq}.
  \item Calculate $s(\lambda)=s(\ell(\lambda))$ using $s(\ell) = \sqrt{1-c^2(\ell)}$.
  \item Substitute $\ell(\lambda)$, $c(\lambda)$ and $s(\lambda)$ into the
    formula provided to get $\eta^*(\lambda)$.
\end{enumerate}

\def\Ldiv{\sqrt{\dfrac{\ell^2c^2+\ell s^2}{c^2+\ell s^2}}}
\def\Lfre{\left( \sqrt{\ell}c^2+s^2 \right)^2}
\def\Laff{\dfrac{(1+c^2)\ell+s^2}{1+c^2+\ell s^2}}
\def\Fone{\ell c^2 + s^2}
\def\Ftwo{\dfrac{\ell}{c^2+\ell s^2}}
\def\Fthree{\dfrac{\ell c^2 + \ell^2 s^2}{c^2 +\ell^2s^2}}
\def\Ffour{\dfrac{\ell^2c^2+s^2}{\ell c^2 + s^2}}
\def\Fsix{1+\dfrac{(\ell-1)c^2}{\left( c^2+\ell s^2 \right)^2}}
\def\Osix{1+\dfrac{\ell-1}{c^2+\ell s^2}}
\def\NOne{\max\left(1+(\ell-1)(1-2s^2)\,,\,1\right)}
\def\Ntwo{\max\left(\dfrac{\ell}{c^2+(2\ell-1)s^2}\,,\,1\right)}
\def\Nthree{\max\left( \dfrac{\ell}{c^2 + \ell^2s^2}\,,\,1 \right)}
\def\Nfour{\max\left( \dfrac{\ell^2 c^2 + s^2}{\ell}\,,\,1 \right)}
\def\Nsix{\max\left( \dfrac{\ell-(\ell-1)^2c^2s^2}{(c^2+\ell s^2)^2}\,,\,1 \right)}

\begin{table}[h!]
\begin{center}
{\renewcommand{\arraystretch}{2.4}
\begin{tabular}{|c|c|c|c|}
\hline
Pivot         & \multicolumn{3}{|c|}{MatrixNorm} \\

  & Frobenius & Operator &  Nuclear  \\
\hline
   $A-B$  & $\Fone$ & $ \ell $ & $\NOne$\\ \hline		    
   $A^{-1} - B^{-1} $ & $\Ftwo$ & $\ell$ & $\Ntwo$ \\ \hline
   $ A^{-1}B -I $  	& $\Fthree$ & N/A  & $\Nthree$ \\ \hline
   $B^{-1}A -I$    	& $\Ffour$ & N/A & $\Nfour$ \\ \hline
   $ A^{-1/2}BA^{-1/2} - I$  & $\Fsix$ & $\Osix$ & $\Nsix$ \\ \hline
 & \multicolumn{3}{|c|}{Statistical Measures} \\ 
         & St &  Ent & Div \\
          \hline 
Stein & $\Ftwo$ & $\Fone$ & $\Ldiv$ \\
\hline
Fr\'echet & \multicolumn{3}{|c|}{$\Lfre$} \\
\hline
Affine & \multicolumn{3}{|c|}{$\Laff$} \\
\hline
\end{tabular}}
\caption{
  Optimal shrinkers $\eta^*(\lambda)$ for 18 of the loss families $L$ discussed.
  Values shown are shrinkers for $\lambda>\lambda_+(\gamma)$. All shrinkers obey
  $\eta^*(\lambda)=1$ for $\lambda\leq \lambda_+(\gamma)$.  Here, $\ell$, $c$
  and $s$ depend on $\lambda$ (and implicitly on $\gamma$) according to
  \eqref{ell_of_lambda:eq}, \eqref{c_of_ell:eq} and $s=\sqrt{1-c^2}$.  In cases
  marked ``N/A'' the optimal shrinker does not seem to admit 
  a simple closed form,
  but can be easily calculated numerically.
}
\label{Table:formulas}
\end{center}
\end{table}

The closed forms we provide are summarized in Table
\ref{Table:formulas}. 
Note that
$\ell$, $c$ and
$s$ refer to 
the functions $\ell(\lambda)$, $c(\ell(\lambda))$ and
$s(\ell(\lambda))$.
These formulae are formally derived in a sequence of lemmas that are stated and
proved 
in the supplemental article \cite{SI}. 
The proofs also show that these optimal shrinkers are unique, as in each case
the optimal shrinker is shown to be the unique minimizer, as in \eqref{opt:eq}, 
of \eqref{opt_obj1:eq}.
We make some remarks on these optimal shrinkers by focusing first on
operator norm loss for covariance and precision matrices:
\begin{equation}
\label{eq:operatorOpt}
\eta^*(\lambda ; \gamma , L^{O,1}) =  \eta^*(\lambda ; \gamma , L^{O,2}) = \left\{ 
            \begin{array}{ll}
                \ell,  & \ell >  \ell_+(\gamma) \\
                1   ,  & \ell \leq  \ell_+(\gamma) 
                \end{array} \right . .
\end{equation}
%
This asymptotic relationship reflects the classical fact that in
finite samples,  
the top empirical eigenvalue is
always biased upwards of the underlying population eigenvalue
\cite{vanderVaart1961,cacoullos1965}. 
Formally defining the (asymptotic) bias as 
\[bias(\eta, \ell) = \eta(\lambda(\ell)) - \ell\,,\]
we have $bias(\lambda(\ell),\ell) > 0$.
The 
formula $\eta^*(\lambda) = \ell$ shows that the optimal nonlinearity for
operator norm loss  is what we
might simply call a {\it debiasing} transformation, mapping each empirical
eigenvalue back to the value of its ``original'' population eigenvalue, and the
corresponding shrinkage estimator $\hat{\Sigma}_\eta$ uses each {\em sample}
eigenvectors with its corresponding {\em population} eigenvalue.
In words, within the top branch of (\ref{eq:operatorOpt}),
the effect of {\it operator-norm optimal shrinkage is to debias the top eigenvalue}:
\[
    bias(\eta^*(\cdot ; \gamma , L^{O,1}),\ell) = bias(\eta^*(\cdot ; \gamma
    , L^{O,2}),\ell) = 0, \qquad \forall \ell > \ell_+(\gamma).
\]
On the other hand, within the bottom branch, 
the effect is to {\it shrink the bulk to 1}.
In terms of  Definition \ref{def:bulk-shrinkers} 
we see that  $\eta^*$ is a bulk shrinker, 
but not a neighborhood bulk shrinker.

One might expect  asymptotic debiasing from {\it every} loss function,
but, perhaps surprisingly, precise asymptotic debiasing is exceptional. 
In fact,  none of the other optimal nonlinearities in Table
\ref{Table:formulas} is precisely debiasing.

In the supplemental article \cite{SI} we also provide a detailed investigation
of the large-$\lambda$ asymptotics of the optimal shrinkers, including their
asymptotic slopes, asymptotic shifts and asymptotic percent improvement.

\section{Beyond Formal Optimality} \label{multspike:sec}

The shrinkers we have derived and analyzed above are formally optimal, as in 
Definition 
\ref{def:Optimal}, in the sense that they minimize the formal expression 
$L_\infty(\ell|\eta)$.
So far we have only shown that formally optimal shrinkers actually 
minimize the asymptotic loss (namely, are asymptotically unique admissible)
in the single-spike
case, under assumptions \Asy\, and \OneSpike, and only over neighborhood
bulk 
shrinkers.

In this section, we show that formally optimal shrinkers
in fact minimize the asymptotic loss in the general Spiked Covariance Model,
namely under assumptions \Asy\, and \Spike, and over a large class of
bulk shrinkers, which are possibly not neighborhood bulk shrinkers.

We start by establishing the rank $r$ analog of Lemma
\ref{jointBlock-perlim}. 
For a vector $\ell \in \R^r$, let $\Delta_r(\ell) =
\text{diag}(\ell_1, \ldots, \ell_r)$. 

\begin{lemma} \label{lem:rank-r-decomp}
  Assume that $\Sigma$ and $\hat{\Sigma}$ are fixed matrices with 
\begin{align*}
  spec(\Sigma) 
    & = [(\ell_1, \ldots, \ell_r, 1, \ldots, 1), (u_1,\ldots,u_p)] \\
  spec(\hat \Sigma)
    & = [(\eta_1, \ldots, \eta_r, 1, \ldots, 1), (v_1,\ldots,v_p)]. 
\end{align*}
Let $U_r$ and $V_{r}$ denote the $p$-by-$r$ matrices consisting of the top
  $r$ eigenvectors of $\Sigma$ and $\hat{\Sigma}$  respectively.
Suppose that $[U_r \ V_r]$ has full rank $2r$, and consider the $QR$
decomposition
\begin{equation*}
  [U_r \ V_r] = Q R,
\end{equation*}
where $Q$ has $2r$ orthonormal columns and the $2r \times 2r$ matrix
$R$ is upper triangular.
Let $R_2$ denote the $2r \times r$ submatrix formed by the last $r$
columns of $R$. 
Fill out $Q$ to an orthogonal matrix $W = [ Q \ Q^\perp]$.
Then in the transformed basis we have the simultaneous block
decompositions
\begin{alignat}{2}
    W' \Sigma W 
     & = \Sigma_{2r}^\circ \oplus I_{p-2r}, \qquad &
     \Sigma_{2r}^\circ & = \Delta_r(\ell) \oplus I_r  \label{eq:A2r} \\    
  W' \hat \Sigma W 
     & = \hat \Sigma_{2r}^\circ \oplus I_{p-2r}, & 
     \hat \Sigma_{2r}^\circ & = I_{2r} + R_2 \Delta_r(\eta-1) R_2'. \label{eq:B2r}
\end{alignat}
\end{lemma}
\begin{proof}

We start with observations about the structure of $Q$ and $R$.
Since the first $r$ columns of $Q$ are
identically those of 
  $U_r$, we let $Z_r$ be the $n$-by-$r$ matrix such that 
   $Q=[U_r\,Z_r]$.
 For the same reason, $R$ 
has the block structure 
  \[
    R= \begin{bmatrix} I_{r\times r} & R_{12} \\ 0_{r\times r} & R_{22}\,
    \end{bmatrix}\,,
  \]
  where the matrices $R_{12}$ and $R_{22}$ satisfy 
  $
    V_{r} = U_r R_{12} + Z_r R_{22}\,,
  $
  so that
  \begin{eqnarray}
    R_{12} =  U_r'\, V_{r} \qquad
    R_{22} = Z_{r}'\, V_{r} \label{two:eq}\,.
  \end{eqnarray}
    Since $V_{r}$ has orthogonal columns, we have
\begin{align}
  I_r  = V_{r}'\, V_{r} & = R_{12}'\, R_{12} + R_{22}' \, R_{22}
  \notag \\
  R_{22}'\, R_{22} &=  I - R_{12}'\, R_{12}\,.
  \label{eq:R22}
\end{align}

Let $H$ be a $p \times r$ matrix whose columns lie in the column span
of $Q$ and let $\Delta$ be an $r \times r$ diagonal matrix.
Observe that
\begin{align*}
  W'(I + H \Delta H') W 
    & = I + W' H \Delta H' W \\
    & = (I_{2r} + Q' H \Delta H' Q) \oplus I_{p-2r} 
     = C_{2r} \oplus I_{p-2r},
\end{align*}
say, since the columns of $Q^\perp$ are orthogonal to those of $H$.

By analogy to \eqref{eq:rankone}, we may write
\begin{equation} \label{eq:rank-r}
   \Sigma = I + U_r (\Delta_r(\ell) - I_r) U_r', \qquad \quad 
   \hat \Sigma = I + V_r (\Delta_r(\eta) - I_r) V_r'
\end{equation}
and so both of the form $I + H\Delta H'$, with $H = U_r$ and $V_r$
respectively. We find that
\begin{equation*}
  Q'U_r =
  \begin{bmatrix}
    I_r \\ 0
  \end{bmatrix}, \qquad 
  Q'V_r =
  \begin{bmatrix}
    R_{12} \\ R_{22}
  \end{bmatrix} = R_2,
\end{equation*}
We can then compute the value of $C_{2r}$ in the two cases to be given
by $\Sigma_{2r}^\circ$ and $\hat \Sigma_{2r}^\circ$ respectively,
which establishes  
\eqref{eq:A2r} and \eqref{eq:B2r}, and hence the lemma.
\end{proof}





We intend to apply 
Lemma \ref{lem:rank-r-decomp} to $\Sigma$ and $\hat \Sigma = \hat
\Sigma_{\eta,r}$, the ``rank-aware'' modification \eqref{eq:rank-aware}
of the estimator $\hat \Sigma_\eta$ in \eqref{eq:scalar-shrink}.
Assume now that $\hat \Sigma$ and the 
$p \times r$ matrix $V_{r,n}$ formed by the top eigenvectors of $V$ are random.

\begin{lemma} \label{lem:rank_2r}
  The rank of $[U_r \ V_{r,n}]$ equals $2r$ almost surely.
\end{lemma}
\begin{proof}
  Let $\Pi_r(V)$ be the projection that picks out the first $r$
  columns of an orthogonal matrix $V$. For a fixed $r$-frame $U_r$, we
  consider the event
  \begin{equation*}
    A = \{ V \in O_p ~:~ \text{rank}([U_r \ \Pi_r(V)]) < 2r \}\,,
  \end{equation*}
  where $O_p$ is the group of orthogonal $p$-by-$p$ matrices.
Let $P_\Sigma(d \Lambda, d V)$ denote the joint distribution of
eigenvalues $\Lambda = \text{diag}(\lambda_1, \ldots, \lambda_p)$ and
eigenvectors $V$ when $S \sim W_p(n,\Sigma)$.
As shown by \cite{james1954}, $P_\Sigma$ is absolutely continuous with
respect to $\nu_p \times \mu_p$, the product of Lebesgue measure on
$\R^p$ and Haar measure on $O(p)$. Since $\mu_p(A) = 0$,
 it follows that $P_\Sigma(A) = 0$.
\end{proof}


\begin{lemma}
  \label{lem:rank-aware-lim}
  Adopt models \Asy\, and \Spike\, with
$\ell_1, \ldots, \ell_r > \ell_+(\gamma)$. Suppose the scalar
nonlinearity $\eta$ is continuous on $(\lambda_+(\gamma),\infty)$.
For each $p$ there exists w.p. 1 an orthogonal change of basis $W$ such that
\begin{equation}
    \label{eq:O-sum}
  W' \Sigma W = \Sigma_{2r} \oplus I_{p-2r}, \qquad 
  W' \hat \Sigma_{\eta, r} W = \hat \Sigma_{2r} \oplus I_{p-2r},
\end{equation}
where the $2r \times 2r$ matrices $\Sigma_{2r}, \hat \Sigma_{2r}$
satisfy
\begin{equation}
   \label{eq:AB-form}
  \Sigma_{2r} = \oplus_{i=1}^r A(\ell_i), \qquad
  \hat \Sigma_{2r} \ \stackrel{a.s.}{\to} \  
      \oplus_{i=1}^p B(\ell_i,\eta),
\end{equation}
and the $2 \times 2$ matrices $A(\ell), B(\ell,\eta)$ are defined at
(\ref{eq:ABforms}).

Suppose also that the family $L=\left\{ L_p \right\}$ of loss functions is
orthogonally invariant and sum- or max- decomposable, and that
$B \to L_{2r}(A, B)$ is continuous. Then
\begin{equation} \label{eq:rankaware-lim}
  L_p(\Sigma, \hat \Sigma_{\eta,r})
   \ \stackrel{a.s.}{\to} \
   \left(\sum/\max\right)_{i=1, \ldots r} \ L_2(A(\ell_i), B(\ell_i, \eta))).
\end{equation}
If $\eta$ is a neighborhood bulk shrinker,
then $L_p(\Sigma, \hat \Sigma_\eta)$ also has this limit a.s.
\end{lemma}
This is the rank $r$ analog of Lemma \ref{lem:asymp-loss}.
The optimal nonlinearity $\eta^*$ is continuous on $[0,\infty)$ for all losses except
the operator norm ones, for which $\eta^*$ is continuous except at
$\lambda = \lambda_+(\gamma)$. 
Our result (\ref{eq:AB-form}) requires only continuity on
$(\lambda_+(\gamma),\infty)$ and so is valid for all 26 loss
functions, 
as is the deterministic limit \eqref{eq:rankaware-lim}  for the
rank-aware $\hat \Sigma_{\eta,r}$.
However, as we saw earlier,  only the nuclear norm based loss
functions yield optimal  functions that 
are neighborhood bulk shrinkers.
To show that
\eqref{eq:rankaware-lim} holds for $L_p(\Sigma, \hat \Sigma_\eta)$ for
most other important shrinkage functions, some further
work is needed -- see Section \ref{sec:removing-rank-aware}
below.

\begin{proof}
  We apply Lemma \ref{lem:rank-r-decomp} to $\Sigma$ and $\hat
  \Sigma_{\eta,r}$
on the set of probability 1 provided by Lemma \ref{lem:rank_2r}.
First, we rewrite (\ref{eq:B2r}) to show the subblocks of $R$:
\begin{equation*}
  \hat \Sigma_{2r}^\circ 
   = I_{2r} +
   \begin{bmatrix}
     R_{12} \\ R_{22}
   \end{bmatrix}
   \Delta_r(\eta^{(n)} -1)
   \begin{bmatrix}
     R_{12}' & R_{22}'
   \end{bmatrix},
\end{equation*}
where we write $\eta^{(n)} = (\eta(\lambda_{1,n}), \ldots,
\eta(\lambda_{r,n}))$ to show explicitly the dependence on $n$.
The limiting behavior of $R$ may be derived from (\ref{two:eq}) and
(\ref{eq:R22}) along with spiked model properties
 (\ref{eig_displacement:eq}) and (\ref{eq:uv-incon}),
so we have\footnote{For simplicity, we chose the $QR$ decomposition to
  make the sign  of $s(\ell_i)$ positive.},
as $n\to\infty$, 
\begin{align}
  R_{12} =  U_r' \,V_{r,n} \  & \ \goto_{a.s}  \Delta_r(c) \notag \\
  R_{22} \,R_{22}' =  I - R_{12} \,R_{12}' & \ \goto_{a.s.}
  \Delta_r(s^2)  \label{eq:Rcge}\\
  R_{22} & \ \goto_{a.s.}  \Delta_r(s) .  \notag
\end{align}
Here $c = (c(\ell_1), \ldots, c(\ell_r))$ and $s = (s(\ell_1), \ldots,
s(\ell_r))$. 

Again by (\ref{eig_displacement:eq})
$\lambda_{i,n} \to_{a.s.} \lambda(\ell_i) >
\lambda_+(\gamma)$ and so continuity of $\eta$ above
$\lambda_+(\gamma)$ assures that
$\Delta_r(\eta^{(n)}-1) \to \Delta_r(\eta -1)$, where 
$\eta = (\eta_i)$ and $\eta_i = \eta(\lambda(\ell_i))$. 
Together with (\ref{eq:uv-incon}),
we obtain simplified structure in the limit,
\begin{equation} 
  \label{eq:block-limit}
  \hat \Sigma_{2r}^\circ \to_{a.s.} I_{2r} + 
  \begin{bmatrix}
    \Delta_r( (\eta-1)c^2 ) & \Delta_r( (\eta-1)cs ) \\
    \Delta_r( (\eta-1)cs )  & \Delta_r( (\eta-1)s^2 )
   \end{bmatrix}.
\end{equation}
To rewrite the limit in block diagonal form, 
let $\Pi_{2r}$ be the permutation matrix corresponding to the
permutation defined by 
  \[
    (1,\ldots,2r)\mapsto \left( 1,r+1,2,r+2,3,\ldots,2r \right)\, .
  \]
Permuting rows and columns in (\ref{eq:A2r}) and
(\ref{eq:block-limit}) using $\Pi_{2r}$ to obtain 
\begin{align*}
  \Sigma_{2r} & := \Pi_{2r}' \Sigma_{2r}^\circ \Pi_{2r} = \oplus_{i=1}^r
  A(\ell_i),    \\
  \hat \Sigma_{2r} 
      & := \Pi_{2r}' \hat \Sigma_{2r}^\circ \Pi_{2r} 
     \to_{a.s.} \oplus_{i=1}^p B(\ell_i,\eta),
\end{align*}
we obtain \eqref{eq:AB-form}. 
Using \eqref{eq:O-sum}, the orthogonal invariance and sum/max
decomposability, along with the continuity of $L_{2r}(A,\cdot)$, we
have
\begin{align*}
  L_p(\Sigma_p, \hat \Sigma_{\eta,r}) 
   & = L_p( \Sigma_{2r} \oplus I_{p-2r}, \hat \Sigma_{2r} \oplus
   I_{p-2r}) \\
   & = L_{2r}( \Sigma_{2r}, \hat \Sigma_{2r}) \\
   & = L_{2r}( \Pi_{2r}' \Sigma_{2r} \Pi_{2r}, \Pi_{2r}' \hat
   \Sigma_{2r} \Pi_{2r}) \\
   & \to_{a.s.} L_{2r}( \oplus_{i=1}^r A(\ell_i), \oplus_{i=1}^p
   B(\ell_i,\eta)) \\
   & = \left(\sum/\max\right)_{i=1, \ldots r} \ L_2(A(\ell_i),
   B(\ell_i, \eta))), 
\end{align*}
which completes the proof of Lemma \ref{lem:rank-aware-lim}.
\end{proof}

\subsection{Removing the rank-aware condition}
\label{sec:removing-rank-aware}

In this section we prove Proposition \ref{prop:removing-rank-aware} below, 
whereby the asymtotic losses coincide for a given 
estimator sequence $\hat \Sigma_\eta$ and the rank-aware versions
$\hat \Sigma_{\eta,r}$. This result is plausible because of two
observations:
\begin{enumerate}
  \item Null eigenvalues {\em stick to the bulk}, i.e. for $i \geq r+1$, most 
eigenvalues $\lambda_{in} \leq \lambda_+(\gamma)$ and the few exceptions
are not much larger. Hence, if $\eta$ is a continuous bulk shrinker,
we expect $\hat 
\Sigma_\eta$ to be close to $\hat \Sigma_{\eta,r}$, 


\item under a suitable continuity assumption on the loss functions
$L_p$, $L(\Sigma,\hat\Sigma_\eta)$ should then be close to 
$L(\Sigma, \hat \Sigma_{\eta,r})$.
\end{enumerate}
\noindent Observation 1 is fleshed out in two steps.
The first step is eigenvalue comparison:
The sample eigenvalue
$\lambda_{in}$ arise as eigenvalues of $X X'/n$ when $X$ is a
$p_n$-by-$n$ matrix whose rows are i.i.d draws from
$\mathcal{N}(0,\Sigma_{p_n})$.
Let $\Pi:\mathbb{R}^{p_n}\to\mathbb{R}^{p_n-r}$ denote the projection on 
the last $p_n-r$ coordinates in $\mathbb{R}^{p_n}$ and
let $\mu_{1n} \geq \dots \geq \mu_{p_n-r,n}$ denote 
the eigenvalues of 
$\Pi X (\Pi X)'/n$. By the Cauchy interlacing Theorem 
(e.g. \cite[p. 59]{bhat97}), we have 
\begin{equation}
  \label{eq:interlace} 
\lambda_{in}\leq \mu_{i-r,n} \qquad \text{for} \  r+1\leq i\leq p_n,
\end{equation}
where the $(\mu_{in})$ are the eigenvalues of a white Wishart matrix 
$W_{p_n -r}(n,I)$. 

The second step is a bound on eigenvalues of a white Wishart that exit
the bulk. 
Before stating it, we return to an important detail
introduced in the Remark concluding Section \ref{assumptions:subsec}.

Definition \ref{def:bulk-shrinkers} of a bulk shrinker depends on the
parameter $\gamma = \lim p/n$ through $\lambda_+(\gamma)$. 
Making that dependence explicit, we obtain a bivariate function 
$\eta(\lambda,c)$. 
In model \Asy and in the $n$-th problem, we might use $\eta(\lambda,
c_n)$ either with $c_n = \gamma$ or $c_n = p/n$. For Proposition
\ref{prop:evals-sticky} below, it will be more natural to use the
latter choice. We also modify Definition \ref{def:bulk-shrinkers} as
follows. 

\begin{defn}
  \label{def:joint-bulk-shrinker}
We call $\eta: [0,\infty) \times (0,1] \to [1,\infty)$ a
\textit{jointly continuous bulk shrinker} if $\eta(\lambda,c)$ is 
 jointly continuous in $\lambda$ and $c$, satisfies
$\eta(\lambda, c) =  1 $ for $\lambda \leq \lambda_+(c)$ and is 
dominated: $\eta(\lambda, c) \leq  M \lambda $ for some $M$ and all
$\lambda$.
\end{defn}

The following result is proved in \cite[Theorem 2(a)]{IMJ2017}.

\begin{proposition}
  \label{prop:evals-sticky}
  Let $(\mu_{in})_{i=1}^{N}$ denote the sample
  eigenvalues of a matrix distributed as $W_{N}(n,I)$, with $N/n
  \to \gamma > 0$. Suppose that 
$\eta(\lambda,c)$ is a jointly continuous bulk shrinker
and that $c_n - N/n = O(n^{-2/3})$. Then for $q > 0$, 
  \begin{equation}
    \label{eq:evals-sticky}
    \| \eta(\mu_{in},c_n) - 1 \|_{\ell_q(\R^{N})} \to_P 0.
  \end{equation}
\end{proposition}


The continuity assumption on the loss functions may be formulated as
follows. 
Suppose that $A, B_1, B_2$ are $p$-by-$p$ 
positive definite matrices, with $A$ satisfying assumption 
\Spike\  and 
$\text{spec}(B_k) = [(\eta_{ki}),(v_i)]$, thus $B_1$ and $B_2$ have
the same eigenvectors. Set $\eta_1 = \max \{ \eta_{11}, \eta_{21}
\}$. 
We assume that for some $q \in [1,\infty]$ and
some continuous function $C(\ell_1, \eta_1)$ not depending on $p$, we
have
\begin{equation}
  \label{eq:C-cond}
  |L_p(A,B_1) - L_p(A,B_2) | 
   \leq C(\ell_1, \eta_{1}) \,
     \| \eta_1 - \eta_2\|_{\ell_q(\R^p)} 
\end{equation}
whenever $\| \eta_1 - \eta_2\|_{\ell_q(\R^p)} \leq 1$.
Condition (\ref{eq:C-cond}) is satisfied for all 26 of the loss
functions of Section \ref{sec:DecomposableLoss}, as is verified in
Proposition 1 in SI.

In the next proposition we adopt the convention that estimators 
$\hat \Sigma_\eta$ of (\ref{eq:scalar-shrink}) and 
$\hat \Sigma_{\eta,r}$ of (\ref{eq:rank-aware}) are constructed with a
jointly continuous bulk shrinker, which we denote $\eta(\lambda, c_n)$.

\begin{proposition}
  \label{prop:removing-rank-aware}
Adopt models \Asy\, and \Spike.
Suppose also that the family $L=\{ L_p \}$ of loss functions is
orthogonally invariant and sum- or max- decomposable, and satisfies
continuity condition (\ref{eq:C-cond}).
If $\eta(\lambda, c_n)$ is a jointly continuous bulk shrinker with
$c_n = p_n/n$, then
\begin{equation*}
  L_p(\Sigma, \hat\Sigma_{\eta})
   - L_p(\Sigma, \hat\Sigma_{\eta,r}) \to_P 0,
\end{equation*}
and so $L_p(\Sigma, \hat\Sigma_{\eta})$ converges in probability to
the deterministic asymptotic loss (\ref{eq:rankaware-lim}).
\end{proposition}

\begin{proof}
In the left side of (\ref{eq:C-cond}), substitute
$A = \Sigma, B_1 = \hat \Sigma_\eta$ and $B_2 = \hat \Sigma_{\eta,r}$.
By definition, $\hat \Sigma_{\eta}$ and $\hat \Sigma_{\eta,r}$ share the same
eigenvectors. 
The components of $\eta_1 - \eta_2$ then satisfy
\begin{equation*}
  \eta_{1i} - \eta_{2i} =
  \begin{cases}
    \eta(\lambda_{in}, c_n) - 1 \qquad & i \geq r+1 \\
    0 & 1 \leq i \leq r.
  \end{cases}
\end{equation*}
We now use (\ref{eq:interlace}) to
compare the eigenvalues $\lambda_{in}$ of the spiked model to
those of a suitable white Wishart matrix to which Proposition
\ref{prop:evals-sticky} applies.
The function $\eta^{\uparrow}(\mu, c) = \max \{ \eta(\lambda, c), 1 \leq
\lambda \leq \mu \}$ and is non-decreasing and
jointly continuous. Hence
$\eta(\lambda_{in},c_n)
  \leq \eta^\uparrow(\lambda_{in},c_n)
  \leq \eta^\uparrow(\mu_{i-r,n},c_n)$, 
and so
\begin{equation*}
  \sum_{i=r+1}^p [\eta(\lambda_{in},c_n) - 1]^q 
  \leq \sum_{j=1}^{p-r} [\eta^\uparrow(\mu_{jn},c_n) - 1]^q,
\end{equation*}
with a corresponding bound for $q = \infty$. From continuity condition
(\ref{eq:C-cond}),
\begin{equation*}
  | L_p(\Sigma, \hat\Sigma_{\eta})
   - L_p(\Sigma, \hat\Sigma_{\eta,r})|
  \leq C(\ell_1, \eta(\lambda_{1n},c_n)) \ 
    \| \eta^\uparrow(\mu_{jn},c_n)-1\|_{\ell_q(\R^{p-r})}.
\end{equation*}
The constant $C(\ell_1, \eta(\lambda_{1n},c_n))$ remains bounded by
(\ref{eig_displacement:eq}).
 The $\ell_q$ norm converges to $0$
in probability, applying Proposition \ref{prop:evals-sticky} 
to the eigenvalues of $W_{p_n-r}(n,I)$,
with $N = p_n - r$, noting that $c_n - N/n = r/n = O(n^{-2/3})$.
\end{proof}


\subsection{Asymptotic loss for discontinuous optimal shrinkers}
\label{sec:asy-loss-disc}

Formula (\ref{eq:operatorOpt}) showed that the optimal shrinker
$\eta^*(\lambda, \gamma)$ for operator norm losses
$L^{O,1},L^{O,2}$ is \textit{discontinuous} at $\ell = \ell_+(\gamma) = 
1 + \sqrt \gamma$. 
In this section, we show that when $\eta^*$ is used, a deterministic
asymptotic loss exists for $L^{O,1}$, but \textit{not} for $L^{O,2}$.
The reason will be seen to lie in the behavior of the optimal
component loss 
$F_*(\ell) = L_2[A(\ell),B(\ell,\eta^*)]$.
Indeed, calculation based on (\ref{opt_obj1:eq}), (\ref{eq:operatorOpt})
shows that for $\ell \geq \ell_+$, 
\begin{equation*}
  F_*(\ell)
   = \left[ \frac{\ell^a \gamma (\ell-1)}{\ell -1+\gamma}
  \right]^{1/2} 
   \to F_*(\ell_+) 
   =
   \begin{cases}
     \sqrt{\gamma} & a = 1 \\[4pt]
     \dfrac{\sqrt{\gamma}}{1+\sqrt{\gamma}} & a = -1
   \end{cases}
\end{equation*}
as $\ell \downarrow \ell_+$, where indices $a = 1$ and $-1$ correspond
to $F_*^{O,1}$ and $F_*^{O,2}$ respectively.
Importantly, $F_*^{O,1}$ is strictly increasing on $[\ell_+,\infty)$
while $F_*^{O,2}$ is strictly decreasing there.

\begin{proposition}
  \label{prop:op-discty}
  Adopt models \Asy\, and \Spike\, with
$\ell_r > \ell_+(\gamma)$.
Consider the optimal shrinker $\eta^*(\lambda, \gamma_n)$ with $\gamma_n =
p_n/n$ given by (\ref{eq:operatorOpt})  for
both $L^{O,1}$ and $L^{O,2}$. For $L^{O,1}$, the asymptotic loss is well defined:
\begin{equation}
  \label{eq:O-1}
    \| \hat \Sigma_\eta - \Sigma \|_\infty 
  - \| \hat \Sigma_{\eta,r} - \Sigma \|_\infty \to_P 0.
\end{equation}
However, for $L^{O,2}$, 
\begin{equation}
  \label{eq:O-2}
    \| \hat \Sigma^{-1}_\eta - \Sigma^{-1} \|_\infty 
  - \| \hat \Sigma^{-1}_{\eta,r} - \Sigma^{-1} \|_\infty 
  \stackrel{\mathcal{D}}{\to} W.
\end{equation}
where $W$ has a two point distribution in which
\begin{equation*}
  W =
  \begin{cases}
     F_*^{O,2}(\ell_+) - F_*^{O,2}(\ell_r) 
         & \text{with prob } 1 - F_1(0) \\
    0    & \text{otherwise},
  \end{cases}
\end{equation*}
and $F_1(0) = \mathbb{P}\{TW_1 \leq 0\}$ for a real Tracy-Widom
variate $TW_1$ \cite{TW1996}.
\end{proposition}

Roughly speaking, there is positive limiting probability that the
largest noise eigenvalue will exit the bulk distribution, and in such
cases the corresponding component loss $F_*(\ell_+)$ -- which is due to
noise alone -- exceeds the
largest component loss due to any of the $r$ spikes, namely
$F_*(\ell_r)$. Essentially, this occurs because precision losses
$L^{\{O,F,N\},2}(a\Sigma,a \hat \Sigma)$ \textit{decrease}
as signal strength $a$ increases. The effect is not seen for
$L^{\{F,N\},2}$ because the optimal shrinkers in those cases are
continuous at $\ell_+$ !

\begin{proof}
For the proof, write $\| \cdot \|$ for $\| \cdot \|_\infty$.
Let $W = [W_1 \ W_2 ]$ be the orthogonal change of basis matrix
constructed in Lemma \ref{lem:rank-aware-lim}, with $W_1$ containing the
first $2r$ columns.
We treat the two losses $L^{O,1}$ and $L^{O,2}$ at once using
an exponent $a = \pm 1$,
and write $\eta^a(\lambda)$ for $\eta^a(\lambda, \gamma_n)$. Thus, let 
\begin{align*}
\Delta & = \Delta_n =  \hat \Sigma^a_\eta - \hat \Sigma^a_{\eta,r} 
= \sum_{i=r+1}^p [\eta^a(\lambda_i)-1] v_i v_i',\\
\intertext{and observe that the loss of the rank-aware estimator}
  \Psi & = \Psi_n = \hat \Sigma^a_{\eta,r} - \hat \Sigma^a
     = \sum_{i=1}^r [ \eta^a(\lambda_i)-1] v_i v_i'
       - \sum_{i=1}^r (\ell^a_i -1) u_i u_i'
\end{align*}
lies in the column span of $W_1$.
We have $\hat \Sigma^a_\eta - \Sigma^a = \Psi_n + \Delta_n$, and  
the main task will be to show that for $a = \pm 1$,
\begin{equation}
  \label{eq:op-sum}
  \| \Psi_n + \Delta_n \| = \max (\| \Psi_n\|, \| \Delta_n \|) + o_P(1).
\end{equation}

Assuming the truth of this for now, let us derive the proposition. The
quantities of interest in (\ref{eq:O-1}), (\ref{eq:O-2}) become
\begin{align*}
  \| \hat{\Sigma}_{\eta}^a - \hat{\Sigma}^a \| - 
  \| \hat{\Sigma}_{\eta,r}^a - \hat{\Sigma}^a \|
  & = \| \Psi_n + \Delta_n \| - \| \Psi_n \| \\
  & = \max (\| \Delta_n \| - \| \Psi_n \|, 0) + o_P(1).
\end{align*}
First, note from Lemma \ref{lem:rank-aware-lim} that
\begin{equation} \label{eq:psilim}
  \| \Psi_n \| \to_{a.s.} \max_{1\leq i \leq r} F_*(\ell_i).
\end{equation}
Observe that for \textit{both} $a = 1$ and $-1$,
\begin{equation*}
  \| \Delta_n \| = \max_{i \geq r+1} |\eta^{* a}( \lambda_{in}) -1| 
= |\eta^a(\lambda_{r+1,n}) -1|.
\end{equation*}
The rescaled noise eigenvalue
$p^{2/3}(\lambda_{r+1,n} - \lambda_+(\gamma_n)) 
\stackrel{\mathcal{D}}{\to} \sigma(\gamma) W$ has a limiting real
Tracy-Widom distribution with scale factor $\sigma(\gamma) > 0$
\cite[Prop. 5.8]{bggm11}.
Hence, using the discontinuity of the optimal shrinker $\eta^*$, and
the square root singularity from above
\begin{equation*}
  \eta^*(\lambda_{r+1,n},\gamma_n) =
  \begin{cases}
    \ell_+(\gamma_n) + O_P(p^{-1/3}) 
    & \lambda_{r+1,n} > \lambda_+(\gamma_n) \\
    1 & \lambda_{r+1,n} \leq  \lambda_+(\gamma_n).
  \end{cases}
\end{equation*}
Consequently, recalling that $ F_*(\ell_+) = |(1+ \sqrt \gamma)^a -1|
$, we have
\begin{equation}
  \label{eq:dellim}
    \| \Delta_n \|  \to_{P}  F_*(\ell_+) I( TW > 0).
\end{equation}

For $L^{O,1}$, with $a = 1$, $F_*(\ell)$ is strictly increasing and so
from (\ref{eq:psilim}) and (\ref{eq:dellim}), we obtain
$\| \Psi_n \| \geq \| \Delta_n \| + o_P(1)$ and hence (\ref{eq:O-1}).
For $L^{O,2}$, with $a = -1$, $F_*(\ell)$ is strictly decreasing and
so on the event $TW > 0$,
\begin{equation*}
  \| \Delta_n \| - \| \Psi_n \| 
   \stackrel{\mathcal{D}}{\to} F_*(\ell_+) - F_*(\ell_r) > 0,
\end{equation*}
which leads to (\ref{eq:O-2}) and hence the main result.

It remains to  prove (\ref{eq:op-sum}).
For a symmetric block matrix,
\begin{equation}
  \label{eq:mbd}
  \max( \| A \|, \| C \|)
   \leq \bigg \|
   \begin{pmatrix}
         A & B \\ B' & C
   \end{pmatrix}
        \bigg\| 
   \leq \max( \| A \|, \| C \|) +  \| B \|.
\end{equation}
Apply this to $ W'(\Psi + \Delta) W$ with
\begin{align*}
  A_n & = W_1' (\Psi + \Delta) W_1, \\
  B_n & = W_1' (\Psi + \Delta) W_2 = W_1' \Delta W_2, \\
  C_n & = W_2' (\Psi + \Delta) W_2 = W_2' \Delta W_2,
\end{align*}
since $\Psi W_2 = 0$.
Hence
\begin{equation} 
\label{eq:Psi-Delt-bd}
  \| \Psi_n + \Delta_n \| 
   = \max( \| A_n \|, \| C_n \|) + O_P( \| B_n \|).
\end{equation}

We  now show that $\| \Delta W_1 \| \to_P 0$. 
Using notation from Lemma \ref{lem:rank-r-decomp},
\begin{equation*}
  W_1 = [U_r \ \  V_r] R^{-1} 
      = [U_r \ \ (V_r - U_r R_{12}) R_{22}^{-1}].
\end{equation*}
Since $\Delta v_k = 0$ for $k = 1, \ldots, r$,
\begin{equation*}
  \| \Delta W_1 \| \leq \| \Delta U_r \| (1 + \| R_{12} R_{22}^{-1} \| ).
\end{equation*}
From (\ref{eq:Rcge}), we have 
$\| R_{12} R_{22}^{-1} \| \to \| \Delta_r(c/s) \| =
c(\ell_1)/s(\ell_1)$, and hence is bounded.
Observe that
$\Delta u_k = \sum_{i=r+1}^p \delta^a_{in} (v_i' u_k) v_i$,
where  we have set $\delta_{in} = \eta(\lambda_i, \gamma_n) - 1$. 
Note from (\ref{eq:operatorOpt}) that $\delta_{in} = 0$ unless
$\lambda_i > \lambda_+(\gamma_n)$. 
With $N_n = \# \{ i \geq r+1  ~:~ \lambda_{in} > \lambda_+(\gamma_n)
\}$, we then have
\begin{equation} \label{eq:delUr}
  \| \Delta U_r \| 
   \leq \sqrt{r} \max_{k = 1, \ldots, r} \| \Delta u_k \|_2 
   \leq \sqrt{r} \| \Delta \| N_n \max_{k \leq r; i > r} |v_i' u_k|.
\end{equation}

From (\ref{eq:dellim}) we have $\| \Delta_n \| = O_P(1)$. 
Since each $v_i, i > r$ is uniformly distributed on $S^{p-1}$,
a simple union bound based on (\ref{eq:unifbound}) below yields
\begin{equation}
  \label{eq:maxubd}
  \max_{i>r, k\leq r} (v_i'u_k)^2 
    = O_P \left( \frac{\log p}{p} \right).
\end{equation}

It remains to bound $N_n$. 
From the interlacing inequality (\ref{eq:interlace}),
\begin{equation*}
  N_n  
      \leq \tilde N_n
      = \# \{ j \geq 1  ~:~ \mu_{jn} > \lambda_+(\gamma_n) \},
\end{equation*}
where $\{ \mu_{jn} \}$ are the eigenvalues of a white Wishart matrix 
$W_{p_n -r}(n,I)$. This quantity is bounded in 
\cite[Theorem 2(c)]{IMJ2017}, which says that
$\tilde N_n = O_p(1)$.
In more detail, we make the correspondences $N \leftarrow p_n -r,
\gamma_N \leftarrow (p_n -r)/n$ and $c_N \leftarrow p_n/n$ so that 
$c_N - \gamma_N = r/n = o(n^{-2/3})$ and obtain
$E \tilde N_n \to c_0 \doteq 0.17$.

From (\ref{eq:delUr}) and the preceding two paragraphs,
we conclude that $\| \Delta U_r \| = O_P( p^{-1/2} \sqrt{\log p})$ and
so $\| \Delta W_1 \| \to_P 0$.


Returning to (\ref{eq:Psi-Delt-bd}), we deduce now that $\| B_n \|
\leq \| \Delta W_1 \| \to_P 0$.
From the definition of $W_1$ we have 
$\| W_1' \Psi W_1 \| = \| \Psi \|$ and hence the inequalities
\begin{equation*}
  | \| A_n \| - \| \Psi_n \| | 
    \leq \| W_1' \Delta W_1 \|
    \to_P 0.
\end{equation*}
Now observe that $\| C_n \| \leq \| \Delta_n \|$.
Apply (\ref{eq:mbd}) to $ W' \Delta W$ to get
\begin{equation*}
  \| \Delta_n \| 
  \leq \| C_n \| + \| W_1' \Delta W_1 \| + \| W_2' \Delta W_1 \|,
\end{equation*}
and hence that $\| C_n \| \geq \| \Delta_n \| - o_P(1)$.
Thus $\| C_n \| = \| \Delta_n \| + o_P(1)$. 
Inserting these results into (\ref{eq:Psi-Delt-bd}), we obtain
\begin{equation*}
  \| \Psi_n + \Delta_n \| 
  = \max( \| A_n \|, \| C_n \|) + o_P(1) 
  = \max( \| \Psi_n \|, \| \Delta_n \|) + o_P(1),
\end{equation*}
which completes the proof of (\ref{eq:op-sum}), and hence of
Proposition \ref{prop:op-discty}.
\end{proof}

Finally, we record a concentration bound for the
uniform distribution on spheres. While more sophisticated results are
known \cite{ledoux}, an elementary bound suffices for us.

\begin{lemma}
  If $U$ is uniformly distributed on $S^{n-1}$ and $u \in S^{n-1}$ is
  fixed, then for $M > 0$ and $n \geq 4$,
\begin{equation}
    \label{eq:unifbound}
P\big( |\langle U,u \rangle | \geq 2 \sqrt{M n^{-1} \log n}
\big) 
\leq \sqrt{\pi/2} \cdot n^{1/2 - M}.
\end{equation}
\end{lemma}
\begin{proof}
  Since $U_1^2 := \langle U,u \rangle^2$ has the
  $\text{Beta}(\frac{1}{2}, \frac{n-1}{2})$ distribution,
\begin{equation*}
 P(U_1^2 \geq a)
  \leq B(\tfrac{1}{2}, \tfrac{n-1}{2})^{-1}
    \int_a^1 t^{-\tfrac{1}{2}} (1-t)^{\tfrac{n-3}{2}} dt
  \leq \gamma_n (1-a)^{\tfrac{n}{2}-1},
\end{equation*}
where by Gautschi's inequality \citep[(5.6.4)]{NIST:DLMF,Olver:2010:NHMF}
\begin{equation*}
  \gamma_n = B(\hf,\hf)/B(\hf, \tfrac{n-1}{2})
    = \sqrt{\pi} \Gamma(\tfrac{n}{2})/\Gamma(\tfrac{n-1}{2})
    < \sqrt{\pi n/2}
\end{equation*}
Since $(1-x/m)^m < e^{-x}$ for $x, m > 0$, and $4/n \geq 2/(n-2)$ for
$n \geq 4$, 
\begin{equation*}
  P(U_1^2 \geq 4 M n^{-1} \log n)
   < \sqrt{\pi n/2} \left( 1 - \frac{M \log n}{n/2 - 1} \right)^{n/2 -
     1} 
   < \sqrt{\pi/2} \cdot n^{1/2 - M}. \qedhere
\end{equation*}
\end{proof}

\section{Optimality Among Equivariant Procedures} \label{optimality:sec}

The notion of optimality in asymptotic loss, with which we have been concerned
so far,
is relatively weak. Also, the class of covariance estimators we have considered,
namely procedures that apply the {\em same} univariate shrinker to all empirical
eigenvalues, is fairly restricted.


Consider the much broader class of orthogonally-equivariant procedures for
covariance estimation 
 \cite{stein1986,LinPerlman1985,muirhead1987developments}, in which estimates
 take the form $\hat{\Sigma} = V\,\Delta \,V'$. Here, $\Delta =\Delta(\Lambda)$
 is {\em any} diagonal
 matrix that depends on the empirical eigenvalues $\Lambda$ in possibly a 
 more complex way than the simple 
 scalar element-wise shrinkage $\eta(\Lambda)$ we have considered so far. One might imagine that
  the extra freedom available with more general shrinkage rules
 would lead to improvements in loss, relative to our optimal scalar nonlinearity;
 certainly the proposals of  \cite{stein1986,LinPerlman1985,ledoit2012nonlinear}
 are of this more general type.

The smallest achievable loss by 
any orthogonally equivariant procedure is obtained with the ``oracle''
procedure $\hat{\Sigma}^{oracle} =  V \,\Delta^{oracle} \,V'$, where
\begin{equation}
  \label{eq:oracle}
        \Delta^{oracle} = \argmin_{\Delta} L(\Sigma,V \,\Delta\, V'),
\end{equation}
the minimum being taken over diagonal matrices with diagonal entries $\geq 1$.
Clearly, this optimal performance is not attainable, since the minimization problem
explicitly demands perfect knowledge of $\Sigma$, precisely the object that we
aim to recover. This knowledge is never
available to us in practice -- hence the label {\it oracle}\footnote{The oracle procedure does not attain zero loss since it is ``doomed'' to use
  the eigenbasis of the empirical covariance, which is a random basis corrupted
by noise, to estimate the population covariance.}.
Nevertheless, this optimal performance is a legitimate benchmark.

Interestingly, at least for the popular Frobenius and Stein
losses,  our optimal nonlinearities 
$\eta^*$ deliver oracle-level performance --
asymptotically.
To state the result, recall expression (\ref{opt_obj1:eq}) for these
losses:
$F(\ell,\Delta) = L_2(A(\ell),B(\ell,\Delta)).$

\begin{thm}  \label{lem:oracleAsyLoss}
({\bf Asymptotic optimality among all equivariant procedures.})
Let $L$ denote either the direct Frobenius loss $L^{F,1}$ or the Stein loss
$L^{st}$. 
Consider a problem sequence satisfying assumptions 
\Asy\,and \Spike. 
We have
\begin{equation*} \label{oracleAsyLoss:eq}
  \lim_{n\to\infty}  L_{p_n}(\Sigma,\hat{\Sigma}^{oracle}) 
     =_P      L_\infty(\ell_1\,\ldots,\ell_r | \eta^*) 
     = \sum_{i=1}^r F(\ell_i, \eta^*),
    \end{equation*}
where $\eta^*$ is the optimal shrinker 
for the losses $L^{F,1}$ or $L^{st}$ in Table \ref{Table:formulas}.
\end{thm}

In short, the shrinker $\eta^*()$, which has been designed to minimize the {\em
limiting} loss, asymptotically delivers the same performance as the oracle
procedure, which has the lowest possible loss, in finite-$n$, over the entire
class of covariance estimators by arbitrary high-dimensional shrinkage rules.
On the other hand, by definition, the oracle procedure outperforms every
orthogonally-equivariant statistical estimator. We conclude that $\eta^*$  -- as
one such orthogonally-invariant estimator -- is indeed optimal (in the sense of
having the lowest limiting loss) among all  orthogonally invariant procedures.
While we only discuss the cases $L^{F,1}$ and $L^{st}$,  we suspect  that this
theorem holds true for many of the 26 loss functions considered.

\begin{proof}
We first outline the approach.
We can write $\Sigma$ and $\Sigma^{-1}$ in the form $I+F$, 
and $\hat \Sigma_\Delta = I + \tilde \Delta$ with
\begin{equation*}
  F = \sum_{k=1}^r \beta_k u_k u_k', \qquad \quad 
  \tilde \Delta = \sum_{i=1}^p \tilde \Delta_i v_i v_i',
\end{equation*}
where $\beta_k = \ell_k -1$ for $L^{F,1}$ and $\ell_k^{-1} - 1$ for
$L^{St}$ and $\tilde \Delta_i = \Delta_i -1$.  
Write
\begin{equation}
  \label{eq:trFdel}
  \tr \, F \tilde \Delta = \sum_{i=1}^p \tilde \Delta_i b_i, \qquad 
   b_i := \sum_{k=1}^r \beta_k (u_k' v_i)^2.
\end{equation}

For both $L = L^{F,1}$ and $L^{st}$, we establish a decomposition
\begin{equation}
  \label{eq:L-decomp}
  L_p(\Sigma,\hat \Sigma_\Delta) 
    = \sum_{i=1}^r F(\ell_i, \Delta_i) + a(\Delta_i -1)\epsilon_i 
      + \sum_{i=r+1}^p H(b_i,\Delta_i).
\end{equation}
Here, 
$a$ is a constant depending only on the loss function, 
\begin{equation}
  \label{eq:epsdef}
  \epsilon_i = b_i - \beta_i c(\ell_i)^2,
\end{equation}
and
\begin{equation} \label{eq:Hdef}
  H(b, \Delta) =
  \begin{cases}
    (\Delta-1)^2 - 2 (\Delta-1) b  \quad &  \text{for } L^{F,1} \\
    (\Delta-1)(1+b) - \log \Delta  &  \text{for } L^{St}.
  \end{cases}
\end{equation}

Decomposition (\ref{eq:L-decomp}) shows that the oracle estimator
(\ref{eq:oracle}) may be found term by term, using just univariate
minimization over each $\Delta_i$.
Consider the first sum in (\ref{eq:L-decomp}), and let $\tilde
F(\ell_i,\Delta_i)$ denote the summand. We will show that
\begin{equation}
  \label{eq:min1}
  \min_{\Delta_i} \tilde F(\ell_i,\Delta_i) 
  \ \stackrel{P}{\to} \ 
  \min_{\Delta_i}  F(\ell_i,\Delta_i),
\end{equation}
and that
\begin{equation}
  \label{eq:min2}
  \sum_{i=r+1}^p \min_{\Delta_i} H(b_i,\Delta_i)
   = O_P \left( \frac{\log^2 p}{p} \right).
\end{equation}
Together (\ref{eq:min1}) and (\ref{eq:min2}) establish the Theorem.

Turning to the details, we begin by showing (\ref{eq:L-decomp}).
For Frobenius loss, we have from our definitions and (\ref{eq:trFdel})
that
\begin{equation*}
  \| \hat \Sigma_\Delta - \Sigma \|_F^2
   = \tr (\tilde \Delta - F)(\tilde \Delta- F)'
   = \sum_{i=1}^p (\Delta_i -1)^2 - 2 (\Delta_i -1)b_i
     + \sum_{i=1}^r (\ell_i-1)^2.
\end{equation*}
For $i \geq r+1$, each summand in the first sum equals
$H(b_i,\Delta_i)$ and for $i \leq r$, we use the decomposition 
$b_i = (\ell_i-1) c(\ell_i)^2 + \epsilon_i$. We obtain decomposition
(\ref{eq:L-decomp}) with $a = -2$ and 
\begin{equation*}
  F(\ell,\Delta) = (\ell -1)^2 - 2(\ell -1)(\Delta-1) c^2 + (\Delta-1)^2.
\end{equation*}

For Stein's loss, our definitions yield
\begin{align*}
  L^{St}(\Sigma, \hat \Sigma_\Delta)
  & = \tr \tilde \Delta + \tr F + \tr F \tilde \Delta
      - \log(|\hat \Sigma_\Delta|/|\Sigma|) \\
  & = \sum_{i=1}^p \tilde \Delta_i (1+b_i) - \log \Delta_i 
     + \sum_{k=1}^r \beta_k + \log \ell_k.
\end{align*}
Again, for each $i \geq r+1$, each summand in the first sum equals
$H(b_i,\Delta_i)$ 
and with  
$b_i = (\ell_i-1) c(\ell_i)^2 + \epsilon_i$ we obtain
(\ref{eq:L-decomp}) with  $a=1$ and
\begin{equation*}
  F(\ell, \Delta) = (\ell^{-1}-1) + (\Delta-1)(c^2/\ell + s^2) - 
     \log(\Delta/\ell).
\end{equation*}

It remains to verify (\ref{eq:min1}) and (\ref{eq:min2}). 
Theorem 1 says that for $1 \leq i \leq r$,
\begin{equation*}
  \epsilon_i = \sum_{k = 1}^r \beta_k [(u_k'v_i)^2 - \delta_{k,i} c(\ell_i)^2]
    \stackrel{P}{\to} 0,
\end{equation*}
which yields (\ref{eq:min1}). 
From (\ref{eq:Hdef}), we observe that in our two cases
\begin{equation}
  \label{eq:quad-bd}
  h(b) := \min_\Delta H(b,\Delta) 
      =
      \begin{cases}
        -b^2 \\ 
        -b + \log(1+b)
      \end{cases}
 = O(b^2),
\end{equation}
Now, using (\ref{eq:trFdel}) and (\ref{eq:maxubd}), we get
\begin{equation*}
  \max_{r+1\leq i \leq p} |b_i| 
    \leq r \max_{1 \leq k \leq r} |\beta_k| \cdot
         \max_{i>r, k\leq r} (u_k'v_i)^2 
    = O_P \left( \frac{\log p}{p} \right).
\end{equation*}
From the previous two displays, we conclude
\begin{equation*}
  \sum_{i=r+1}^p \min_{\Delta_i} H(b_i,\Delta_i)
   = \sum_{i=r+1}^p h(b_i) 
   = O_P \left( \frac{\log^2 p}{p} \right).
\end{equation*}
which is (\ref{eq:min2}), and so completes the full proof.
\end{proof}

\section{Optimal Shrinkage with common variance $\sigma^2\neq 1$} \label{sigma:sec}

Simply put, the Spiked Covariance Model is a proportional growth
independent-variable Gaussian model,
where all variables, except the first $r$, have common variance $\sigma$. 
Literature on the spiked model often simplifies the situation by assuming
$\sigma^2=1$, as we have done in our assumption \Spike\ above.
To consider optimal shrinkage in the case of general common variance $\sigma^2>0$,
assumption \Spike\, has to be replaced by
\begin{description}
  \item[{\sc [Spike($\ell_1,\ldots,\ell_r|\sigma^2$)]}] 
    The population eigenvalues in the $n$-th problem, namely the eigenvalues of
    $\Sigma_{p_n}$, are given
    by
$(\ell_1,\ldots,\ell_r,\sigma^2,\ldots,\sigma^2)$,
where the number of ``spikes'' $r$ and
their amplitudes $\ell_1> \ldots > \ell_r\geq
1$ are fixed independently of $n$ and $p_n$.
\end{description} 
In this section we show how to use an optimal shrinker, designed for the spiked
model with common variance $\sigma^2=1$, in order to construct an optimal
shrinker for a general common
variance $\sigma^2$, namely, under assumptions \Asy\, and \SpikeSigma.

\subsection{$\sigma^2$ known}
Let $\Sigma_p$ and $S_{n,p}$ be population and sample covariance matrices,
respectively, under assumption \SpikeSigma.  When the value of $\sigma$ is known, the matrices 
$\tilde{\Sigma}_p=\Sigma_p/\sigma^2$ and the sample covariance matrix
$\tilde{S}_{n,p}=S_{n,p}/\sigma^2$ constitute population and sample covariance
matrices, respectively,  under assumption \Spike. Let $L$ be any of the loss
families considered above and let $\eta$ be a shrinker.
Define the shrinker $\tilde{\eta}$ corresponding to $\eta$ by
\begin{eqnarray} \label{translate:eq}
  \tilde{\eta}:\lambda\mapsto \sigma^2 \cdot \eta(\lambda/\sigma^2)\,.
\end{eqnarray}
Observe that for each of the loss families we consider, 
$L_p(\sigma^2 A, \sigma^2 B) = \sigma^{2 \kappa} L_p(A,B)$, where 
$\kappa \in \{ -2,-1,0,1,2\}$ depends on the family $\{L_p\}$ alone.
Hence
\[
  L_p\left(\Sigma_{p},\hat{\Sigma}_{\tilde{\eta}}(S_{n,p})\right) =
  \sigma^{2 \kappa}
  L_p\left(\tilde{\Sigma}_{p},\hat{\Sigma}_{\eta}(\tilde{S}_{n,p})\right)  
\]
It follows that if
$\eta^*$ is the optimal shrinker for the loss family $L$, in the sense of
Definition  \ref{def:Optimal}, under Assumption \Spike\,, then 
 $\tilde{\eta}^*$ is the optimal shrinker for $L$ under Assumption 
 \SpikeSigma. Formula \eqref{translate:eq} therefore allows us to translate each
 of the optimal shrinkers given above to a corresponding optimal shrinker in the
 case of a general common variance $\sigma^2>0$. 

\subsection{$\sigma^2$ unknown}

In practice, even if one is willing to assume a common variance $\sigma^2$ and
subscribe to the spiked model, the value of $\sigma^2$ is usually unknown. 
Assume however that we have a sequence of estimators
$\{\hat{\sigma}_n\}_{n=1,2,\ldots}$, where
for each $n$, $\hat{\sigma}_n$ is a real function of a $p_n$-by-$p_n$ positive
definite symmetric matrix argument. Assume further that under the spiked model
with general common variance $\sigma^2$, namely under assumptions \Asy\, and
\SpikeSigma, the sequence of estimators is consistent in the sense that 
$\hat{\sigma}_n(S_{n,p_n})\to \sigma$, almost surely. For a continuous shrinker $\eta$, 
define a sequence of
shrinkers $\left\{ \tilde{\eta}_n \right\}_{n=1,2,\ldots}$ by
\begin{eqnarray} \label{translate_asy:eq}
  \tilde{\eta}_n:\lambda\mapsto \hat{\sigma}_n^2 \cdot
  \eta\left(\lambda/\hat{\sigma}_n^2\right)\,.
\end{eqnarray}

Again for each of the loss families we consider, almost surely,
\[
  \lim_{n\to\infty}
  L_{p_n}\left(\Sigma_{p_n},\hat{\Sigma}_{\tilde{\eta}_n}(S_{n,p_n})\right)
  = \sigma^{2 \kappa}  \lim_{n\to\infty}
  L_{p_n}\left(\tilde{\Sigma}_{p_n},\hat{\Sigma}_{\eta}(\tilde{S}_{n,p_n})\right). 
\]
We conclude that, using \eqref{translate_asy:eq}, any consistent sequence of
estimators $\hat{\sigma}_n$ yields a sequence of shrinkers with the same
asymptotic loss as the optimal shrinker for known $\sigma^2$. In other words, at
least inasmuch as the asymptotic loss is concerned, under the spiked model, 
there is no penalty for not knowing $\sigma^2$.

Estimation of $\sigma^2$ under Assumption \SpikeSigma\, has been considered in
\cite{Kritchman2009,passemier2013,Shabalin2013} where several approaches have
been proposed.
As an simple example of a consistent sequence of estimators $\hat{\sigma}_n$, we consider
the following simple and robust approach based on matching of medians
\cite{2013arXiv1305.5870D}. The underlying idea is that for a
given value of $n$ the
sample eignevalues $\lambda_{r+1},\ldots,\lambda_{p_n}$ form an approximate
Mar\v{c}enko-Paster bulk inflated by $\sigma^2$, and that a median sample eigenvalue is
well suited to detect this inflation as it is unaffected by the sample spikes
$\lambda_1,\ldots,\lambda_r$.

Define, for a symmetric $p$-by-$p$ positive definite matrix $S$ with eigenvalues
$\lambda_1,\ldots,\lambda_p$ the quantity
\begin{eqnarray}
  \mu(S) = \frac{\lambda_{med}}{\mu_\gamma}\,,
\end{eqnarray}
where $\lambda_{med}$ is a median of $\lambda_1,\ldots,\lambda_p$ 
 and $\mu_\gamma$ is the
median of the Mar\v{c}enko-Pastur distribution, namely, the unique solution 
in $\lambda_-(\gamma)\leq x\leq \lambda_+(\gamma)$
to the equation
\begin{eqnarray*}
  \intop_{\lambda_-(\gamma)}^x
  \frac{\sqrt{(\lambda_+(\gamma)-t)(t-\lambda_-(\gamma))}}{2\pi \gamma t}dt = 
  \frac{1}{2}\,,
\end{eqnarray*}
where as before $\lambda_\pm(\gamma) = (1\pm\sqrt{\gamma})^2$.
Note that the median $\mu_\gamma$ is not available analytically but
can easily be obtained 
numerically, for example using remarks on the Mar\v{c}enko-Pastur
cumulative distribution function included in SI.
Now for a sequence $\left\{ S_{n,p_n} \right\}$ of
sample covariance matrices, define the sequence of estimators 
\begin{eqnarray} \label{sigmahat:eq}
  \hat{\sigma}_n:S_{n,p_n} \mapsto \sqrt{\mu(S_{n,p_n})}\,.
\end{eqnarray}

\begin{lemma} \label{sigmahat:lem}
  Let $\sigma^2>0$, and assume \Asy\, and \SpikeSigma. Then almost surely
  \[\lim_{n\to\infty} \hat{\sigma}_n(S_{n,p_n}) = \sigma .\]
\end{lemma}

In summary, using \eqref{translate:eq} (for $\sigma^2$ known) or
\eqref{translate_asy:eq} with \eqref{sigmahat:eq} (for $\sigma^2$ unknown) one
can use the optimal shrinkers for
each of the loss families discussed above, designed for the case $\sigma=1$, to
construct a
shrinker that is optimal, for the same loss family, under the spiked model with
common variance $\sigma^2\neq 1$.

\section{Discussion} \label{discussion:sec}

In this paper, we considered covariance estimation in high
dimensions, where the dimension $p$ is comparable to the number of observations
$n$. We chose a fixed-rank principal subspace, and let the dimension of the
problem grow large.  A different asymptotic framework for covariance estimation
would choose a principal subspace whose rank is a fixed {\em fraction} of the problem
dimension; i.e. the rank of the principal subspace is growing rather than fixed.
(In the sibling problem of matrix denoising, compare the ``spiked'' setup
\cite{2013arXiv1305.5870D,Shabalin2013,GavishDonohoSingShrink} with the ``fixed fraction'' setup  of
\cite{2013arXiv1304.2085D}.)

In the fixed fraction framework, some of underlying phenomena remain qualitatively similar to
those governing the spiked model, while new effects appear.
Importantly, the
relationships used in this paper, predicting the location of the top empirical
eigenvalues, as well as the displacement of empirical eigenvectors,
in terms of the top theoretical eigenvalues, no longer hold.
Instead, a complex nonlinear relation exists between the limiting
distribution of the empirical eigenvalues and the limiting distribution of the
theoretical eigenvalues, as expressed by the Mar\v{c}enko-Pastur (MP) relation
between their Stieltjes transforms
\cite{marvcenko1967distribution,bai2010spectral}.

Covariance shrinkage in the proportional rank model should then, naturally, make
use of the so-called {\em MP Equation}.  Noureddine El Karoui
\cite{karoui2008spectrum}  proposed a method for debiasing the empirical
eigenvalues, namely, for estimating (in a certain specific sense) their corresponding
population eigenvalues; Olivier Ledoit and Sandrine Pech\'{e}
\cite{ledoitPeche} developed analytic tools to also account for the inaccuracy
of empirical eigenvectors,  and Ledoit and Michael Wolf
\cite{ledoit2012nonlinear} have implemented such tools and applied them in this
setting.

The proportional rank
case is indeed subtle and beautiful.  
Yet, the fixed-rank case
deserves to be worked out carefully. In particular, the shrinkers we have
obtained here in the fixed-rank case are extremely simple to implement,
requiring just a few code lines in any scientific computing language. In
comparison, the covariance estimation ideas of
\cite{karoui2008spectrum,ledoit2012nonlinear}, based on powerful and deep
insights from MP theory, require a delicate, nontrivial effort to implement in software,
and call for expertise in numerical analysis and optimization.  As a
result, the simple shrinkage rules we propose here 
may be 
more likely to be applied correctly
in practice, and to work as expected, even in relatively small sample sizes.

An analogy 
can be made to shrinkage in the normal means
problem, for example \cite{donoho1994minimax}.  In that
problem, often a full Bayesian model applies, and in principle a Bayesian
shrinkage would provide an optimal result \cite{brown2009}. Yet, in applications
one often wants a simple method which is easy to implement correctly, and which
is able to deliver much of the benefit of the full Bayesian approach.  In
literally thousands of cases, simple methods of shrinkage \-- such as
thresholding \-- have been chosen over the full Bayesian method for precisely
that reason.

\section*{Reproducible Research}
\label{ssec:reproResearch}

In the code supplement
  \cite{SDR} we offer 
a Matlab software library that includes:
\begin{enumerate}
  \item A function to compute the value of each of the 26 optimal shrinkers
    discussed to high precision.
  \item A function to test the correctness of each of the 18 analytic shrinker fomulas
    provided.
  \item Scripts that generate each of the figures in this paper, or subsets of
    them for specified loss functions. 
\end{enumerate}



\section*{Acknowledgements}

\section*{Proofs and Additional Results}
In the supplementary material \cite{SI} we provide proofs omitted from the main text for
  space considerations and auxiliary lemmas used in various proofs.
   Notably, we prove 
   Lemma \ref{bulk:lem},
  and provide detailed derivations of the
  17 explicit formulas for optimal shrinkers, as summarized in Table 
  \ref{Table:formulas}.
 In addition, in the supplementary material we offer a detailed study of the 
   large-$\lambda$ asymptotics (asymptotic slope and asymptotic shift) of the
   optimal shrinkers discovered in this paper, and tabulate the asymptotic
   behavior of each optimal shrinker.  We also study the asymptotic percent
   improvement of the optimal shrinkers over naive hard thresholding of the
   sample covariance eigenvalues.

\bibliography{SpikedCovar,temp}

\begin{thebibliography}{10}

\bibitem{stein1956}
Charles Stein.
\newblock Some problems in multivariate analysis.
\newblock Technical report, Department of Statistics, Stanford University,
  1956.

\bibitem{stein1986}
Charles Stein.
\newblock Lectures on the theory of estimation of many parameters.
\newblock {\em Journal of Mathematical Sciences}, 34(1):1373--1403, 1986.

\bibitem{james1961estimation}
William James and Charles Stein.
\newblock Estimation with quadratic loss.
\newblock In {\em Proceedings of the fourth Berkeley symposium on mathematical
  statistics and probability}, volume~1, pages 361--379, 1961.

\bibitem{EfronMorris1976}
Bradley Efron and Carl Morris.
\newblock Multivariate empirical bayes and estimation of covariance matrices.
\newblock {\em The Annals of Statistics}, 4(1):pp. 22--32, 1976.

\bibitem{haff1979identity}
LR~Haff.
\newblock An identity for the wishart distribution with applications.
\newblock {\em Journal of Multivariate Analysis}, 9(4):531--544, 1979.

\bibitem{haff1980empirical}
LR~Haff.
\newblock Empirical bayes estimation of the multivariate normal covariance
  matrix.
\newblock {\em The Annals of Statistics}, 8(3):586--597, 1980.

\bibitem{berger1982estimation}
James Berger.
\newblock Estimation in continuous exponential families: Bayesian estimation
  subject to risk restrictions and inadmissibility results.
\newblock {\em Statistical Decision Theory and Related Topics III}, 1:109--141,
  1982.

\bibitem{haff1979estimation}
LR~Haff.
\newblock Estimation of the inverse covariance matrix: Random mixtures of the
  inverse wishart matrix and the identity.
\newblock {\em The Annals of Statistics}, pages 1264--1276, 1979.

\bibitem{dey1985estimation}
Dipak~K Dey and C~Srinivasan.
\newblock Estimation of a covariance matrix under stein's loss.
\newblock {\em The Annals of Statistics}, pages 1581--1591, 1985.

\bibitem{Sharma1985}
Divakar Sharma and K.~Krishnamoorthy.
\newblock Empirical bayes estimators of normal covariance matrix.
\newblock {\em Sankhya: The Indian Journal of Statistics, Series A
  (1961-2002)}, 47(2):pp. 247--254, 1985.

\bibitem{SinhaGhosh}
BK~Sinha and M.~Ghosh.
\newblock Inadmissibility of the best equivariant estimators of the
  variance-covariance matrix, the precision matrix, and the generalized
  variance under entropy loss.
\newblock {\em Statist. Decisions}, 5:201--227, 1987.

\bibitem{Kubokawa198969}
Tatsuya Kubokawa.
\newblock Improved estimation of a covariance matrix under quadratic loss.
\newblock {\em Statistics \& Probability Letters}, 8(1):69 -- 71, 1989.

\bibitem{krishnamoorthy1989improved}
K~Krishnamoorthy and AK~Gupta.
\newblock Improved minimax estimation of a normal precision matrix.
\newblock {\em Canadian Journal of Statistics}, 17(1):91--102, 1989.

\bibitem{loh1991estimating}
Wei-Liem Loh.
\newblock Estimating covariance matrices.
\newblock {\em The Annals of Statistics}, pages 283--296, 1991.

\bibitem{CJS:CJS91}
K.~Krishnamoorthy and A.~K. Gupta.
\newblock Improved minimax estimation of a normal precision matrix.
\newblock {\em Canadian Journal of Statistics}, 17(1):91--102, 1989.

\bibitem{pal1993estimating}
N~Pal.
\newblock Estimating the normal dispersion matrix and the precision matrix from
  a decision-theoretic point of view: a review.
\newblock {\em Statistical Papers}, 34(1):1--26, 1993.

\bibitem{yang1994estimation}
Ruoyong Yang and James~O Berger.
\newblock Estimation of a covariance matrix using the reference prior.
\newblock {\em The Annals of Statistics}, pages 1195--1211, 1994.

\bibitem{gupta1995improved}
AK~Gupta and Samuel Ofori-Nyarko.
\newblock Improved minimax estimators of normal covariance and precision
  matrices.
\newblock {\em Statistics: A Journal of Theoretical and Applied Statistics},
  26(1):19--25, 1995.

\bibitem{LinPerlman1985}
S.F. Lin and M.D Perlman.
\newblock A monte carlo comparison of four estimators for a covariance matrix.
\newblock In {\em Multivariate Analysis VI (P.R. Krishnaiah, ed.)}, pages
  411--429. North Holland, Amsterdam, 1985.

\bibitem{daniels2001shrinkage}
Michael~J Daniels and Robert~E Kass.
\newblock Shrinkage estimators for covariance matrices.
\newblock {\em Biometrics}, 57(4):1173--1184, 2001.

\bibitem{ledoit2004well}
Olivier Ledoit and Michael Wolf.
\newblock A well-conditioned estimator for large-dimensional covariance
  matrices.
\newblock {\em Journal of multivariate analysis}, 88(2):365--411, 2004.

\bibitem{Sun2005455}
D.~Sun and X.~Sun.
\newblock Estimation of the multivariate normal precision and covariance
  matrices in a star-shape model.
\newblock {\em Annals of the Institute of Statistical Mathematics},
  57(3):455--484, 2005.
\newblock cited By (since 1996)7.

\bibitem{huang2006covariance}
Jianhua~Z Huang, Naiping Liu, Mohsen Pourahmadi, and Linxu Liu.
\newblock Covariance matrix selection and estimation via penalised normal
  likelihood.
\newblock {\em Biometrika}, 93(1):85--98, 2006.

\bibitem{karoui2008spectrum}
Noureddine~El Karoui.
\newblock Spectrum estimation for large dimensional covariance matrices using
  random matrix theory.
\newblock {\em The Annals of Statistics}, pages 2757--2790, 2008.

\bibitem{ledoitPeche}
Olivier Ledoit and Sandrine P\'ech\'e.
\newblock Eigenvectors of some large sample covariance matrix ensembles.
\newblock {\em Probability Theory and Related Fields}, 151(1-2):233--264, 2011.

\bibitem{ledoit2012nonlinear}
Olivier Ledoit and Michael Wolf.
\newblock Nonlinear shrinkage estimation of large-dimensional covariance
  matrices.
\newblock {\em The Annals of Statistics}, 40(2):1024--1060, 2012.

\bibitem{fan2008high}
Jianqing Fan, Yingying Fan, and Jinchi Lv.
\newblock High dimensional covariance matrix estimation using a factor model.
\newblock {\em Journal of Econometrics}, 147(1):186--197, 2008.

\bibitem{chen2010shrinkage}
Yilun Chen, Ami Wiesel, Yonina~C Eldar, and Alfred~O Hero.
\newblock Shrinkage algorithms for mmse covariance estimation.
\newblock {\em Signal Processing, IEEE Transactions on}, 58(10):5016--5029,
  2010.

\bibitem{won2012condition}
Joong-Ho Won, Johan Lim, Seung-Jean Kim, and Bala Rajaratnam.
\newblock Condition-number-regularized covariance estimation.
\newblock {\em Journal of the Royal Statistical Society: Series B (Statistical
  Methodology)}, 2012.

\bibitem{johnstone2001distribution}
Iain~M Johnstone.
\newblock On the distribution of the largest eigenvalue in principal components
  analysis.
\newblock {\em The Annals of statistics}, 29(2):295--327, 2001.

\bibitem{Shabalin2013}
Andrey~a. Shabalin and Andrew~B. Nobel.
\newblock {Reconstruction of a low-rank matrix in the presence of Gaussian
  noise}.
\newblock {\em Journal of Multivariate Analysis}, 118:67--76, 2013.

\bibitem{2013arXiv1305.5870D}
M.~{Gavish} and D.~L. {Donoho}.
\newblock {The Optimal Hard Threshold for Singular Values is 4/$\sqrt{3}$}.
\newblock {\em IEEE Transactions on Information Theory}, 60(8):5040--5053,
  2014.

\bibitem{marvcenko1967distribution}
Vladimir~A Mar{\v{c}}enko and Leonid~Andreevich Pastur.
\newblock Distribution of eigenvalues for some sets of random matrices.
\newblock {\em Sbornik: Mathematics}, 1(4):457--483, 1967.

\bibitem{baik2005phase}
Jinho Baik, G{\'e}rard Ben~Arous, and Sandrine P{\'e}ch{\'e}.
\newblock Phase transition of the largest eigenvalue for nonnull complex sample
  covariance matrices.
\newblock {\em Annals of Probability}, pages 1643--1697, 2005.

\bibitem{baik2006eigenvalues}
Jinho Baik and Jack~W Silverstein.
\newblock Eigenvalues of large sample covariance matrices of spiked population
  models.
\newblock {\em Journal of Multivariate Analysis}, 97(6):1382--1408, 2006.

\bibitem{paul2007asymptotics}
Debashis Paul.
\newblock Asymptotics of sample eigenstructure for a large dimensional spiked
  covariance model.
\newblock {\em Statistica Sinica}, 17(4):1617, 2007.

\bibitem{Benaych-Georges2011}
Florent Benaych-Georges and Raj~Rao Nadakuditi.
\newblock {The eigenvalues and eigenvectors of finite, low rank perturbations
  of large random matrices}.
\newblock {\em Advances in Mathematics}, 227(1):494--521, 2011.

\bibitem{bai2008central}
Zhidong Bai and Jian-feng Yao.
\newblock {Central limit theorems for eigenvalues in a spiked population
  model}.
\newblock {\em Annales de l'Institut Henri Poincare (B) Probability and
  Statistics}, 44(3):447--474, 2008.

\bibitem{konno1991estimation}
Yoshihiko Konno.
\newblock On estimation of a matrix of normal means with unknown covariance
  matrix.
\newblock {\em Journal of Multivariate Analysis}, 36(1):44--55, 1991.

\bibitem{SI}
David~L. Donoho, Matan Gavish, and Iain~M. Johnstone.
\newblock {Supplementary Material for ``Optimal Shrinkage of Eigenvalues in the
  Spiked Covariance Model''}.
\newblock \url{http://purl.stanford.edu/xy031gt1574}, 2016.

\bibitem{SDR}
David~L. Donoho, Matan Gavish, and Iain~M. Johnstone.
\newblock {Code supplement to ``Optimal Shrinkage of Eigenvalues in the Spiked
  Covariance Model''}.
\newblock \url{http://purl.stanford.edu/xy031gt1574}, 2016.

\bibitem{gema80}
Stuart Geman.
\newblock A limit theorem for the norm of random matrices.
\newblock {\em Annals of Probability}, 8:252--261, 1980.

\bibitem{kubokawa1990estimating}
Tatsuya Kubokawa and Yoshihiko Konno.
\newblock Estimating the covariance matrix and the generalized variance under a
  symmetric loss.
\newblock {\em Annals of the Institute of Statistical Mathematics},
  42(2):331--343, 1990.

\bibitem{kailath1967divergence}
Thomas Kailath.
\newblock The divergence and bhattacharyya distance measures in signal
  selection.
\newblock {\em Communication Technology, IEEE Transactions on}, 15(1):52--60,
  1967.

\bibitem{matsusita1967}
Kameo Matusita.
\newblock On the notion of affinity of several distributions and some of its
  applications.
\newblock {\em Annals of the Institute of Statistical Mathematics},
  19:181--192, 1967.

\bibitem{olkin1982distance}
Ingram Olkin and Friedrich Pukelsheim.
\newblock The distance between two random vectors with given dispersion
  matrices.
\newblock {\em Linear Algebra and its Applications}, 48:257--263, 1982.

\bibitem{dowson1982frechet}
DC~Dowson and BV~Landau.
\newblock The fr\'{e}chet distance between multivariate normal distributions.
\newblock {\em Journal of Multivariate Analysis}, 12(3):450--455, 1982.

\bibitem{selliah1964estimation}
Jegadevan~Balendran Selliah.
\newblock {\em Estimation and testing problems in a Wishart distribution}.
\newblock Department of Statistics, Stanford University., 1964.

\bibitem{lenglet2006statistics}
Christophe Lenglet, Mika{\"e}l Rousson, Rachid Deriche, and Olivier Faugeras.
\newblock Statistics on the manifold of multivariate normal distributions:
  Theory and application to diffusion tensor mri processing.
\newblock {\em Journal of Mathematical Imaging and Vision}, 25(3):423--444,
  2006.

\bibitem{dryden2009}
Ian~L. Dryden, Alexey Koloydenko, and Diwei Zhou.
\newblock Non-euclidean statistics for covariance matrices, with applications
  to diffusion tensor imaging.
\newblock {\em The Annals of Applied Statistics}, 3(3):pp. 1102--1123, 2009.

\bibitem{forstner1999metric}
Wolfgang F{\"o}rstner and Boudewijn Moonen.
\newblock A metric for covariance matrices.
\newblock {\em Quo vadis geodesia}, pages 113--128, 1999.

\bibitem{karoui2008operator}
Noureddine~El Karoui.
\newblock Operator norm consistent estimation of large-dimensional sparse
  covariance matrices.
\newblock {\em The Annals of Statistics}, pages 2717--2756, 2008.

\bibitem{GavishDonohoSingShrink}
M.~{Gavish} and D.~L. {Donoho}.
\newblock {Optimal Shrinkage of Singular Values}.
\newblock {\em ArXiv 1405.7511}, 2016.

\bibitem{vanderVaart1961}
H.~R. van~der Vaart.
\newblock On certain characteristics of the distribution of the latent roots of
  a symmetric random matrix under general conditions.
\newblock {\em The Annals of Mathematical Statistics}, 32(3):pp. 864--873,
  1961.

\bibitem{cacoullos1965}
Theophilos Cacoullos and Ingram Olkin.
\newblock On the bias of functions of characteristic roots of a random matrix.
\newblock {\em Biometrika}, 52(1/2):pp. 87--94, 1965.

\bibitem{james1954}
A.~T. James.
\newblock Normal multivariate analysis and the orthogonal group.
\newblock {\em Annals of Mathematical Statistics}, 25(1):40--75, 1954.

\bibitem{bhat97}
Rajendra Bhatia.
\newblock {\em Matrix analysis}, volume 169 of {\em Graduate Texts in
  Mathematics}.
\newblock Springer-Verlag, New York, 1997.

\bibitem{IMJ2017}
Iain~M. Johnstone.
\newblock Tail sums of wishart and gue eigenvalues beyond the bulk edge.
\newblock {\em Australian and New Zealand Journal of Statistics}, 2017.
\newblock to appear.

\bibitem{TW1996}
C.~A. Tracy and H.~Widom.
\newblock On orthogonal and symplectic matrix ensembles.
\newblock {\em Comm Math Phys.}, 177:727--754, 1996.

\bibitem{bggm11}
F.~Benaych-Georges, A.~Guionnet, and M.~Maida.
\newblock Fluctuations of the extreme eigenvalues of finite rank deformations
  of random matrices.
\newblock {\em Electron. J. Probab.}, 16:no. 60, 1621--1662, 2011.

\bibitem{ledoux}
Michel Ledoux.
\newblock {\em The concentration of measure phenomenon}.
\newblock Number~89. American Mathematical Society, 2001.

\bibitem{NIST:DLMF}
{NIST Digital Library of Mathematical Functions}.
\newblock http://dlmf.nist.gov/, Release 1.0.9 of 2014-08-29.
\newblock Online companion to \cite{Olver:2010:NHMF}.

\bibitem{Olver:2010:NHMF}
F.~W.~J. Olver, D.~W. Lozier, R.~F. Boisvert, and C.~W. Clark, editors.
\newblock {\em {NIST Handbook of Mathematical Functions}}.
\newblock Cambridge University Press, New York, NY, 2010.
\newblock Print companion to \cite{NIST:DLMF}.

\bibitem{muirhead1987developments}
Robb~J Muirhead.
\newblock Developments in eigenvalue estimation.
\newblock In {\em Advances in Multivariate Statistical Analysis}, pages
  277--288. Springer, 1987.

\bibitem{Kritchman2009}
Shira Kritchman and Boaz Nadler.
\newblock {Non-parametric detection of the number of signals: Hypothesis
  testing and random matrix theory}.
\newblock {\em Signal Processing, IEEE Transactions}, 57(10):3930--3941, 2009.

\bibitem{passemier2013}
Damien Passemier and Jian-feng Yao.
\newblock {Variance estimation and goodness-of-fit test in a high-dimensional
  strict factor model}.
\newblock {\em arXiv:1308.3890}, 2013.

\bibitem{2013arXiv1304.2085D}
D.~L. {Donoho} and M.~{Gavish}.
\newblock {Minimax Risk of Matrix Denoising by Singular Value Thresholding}.
\newblock {\em ArXiv e-prints}, 2013.

\bibitem{bai2010spectral}
Zhidong Bai and Jack~W Silverstein.
\newblock {\em Spectral analysis of large dimensional random matrices}.
\newblock Springer, 2010.

\bibitem{donoho1994minimax}
David~L Donoho and Iain~M Johnstone.
\newblock Minimax risk over $\ell_p$-balls for $\ell_p$-error.
\newblock {\em Probability Theory and Related Fields}, 99(2):277--303, 1994.

\bibitem{brown2009}
Lawrence~D. Brown and Eitan Greenshtein.
\newblock Nonparametric empirical bayes and compound decision approaches to
  estimation of a high-dimensional vector of normal means.
\newblock {\em The Annals of Statistics}, 37(4):pp. 1685--1704, 2009.

\end{thebibliography}
\bibliographystyle{unsrt}


\makeatletter
\newcommand*{\storecounter}[2]{%
  \edef\@currentlabel{\csname the#1\endcsname}
  \label{#2}
}
\makeatother

\storecounter{thm}{extthm}
\storecounter{cor}{extcor}
\storecounter{defn}{extdefn}
\storecounter{lemma}{extlemma}
\storecounter{prop}{extprop}
\storecounter{figure}{extfigure}

\end{document}